\documentclass{amsart}
\usepackage{amscd}
\usepackage[T1]{fontenc}
\usepackage[utf8]{inputenc}
\usepackage{lmodern}
\usepackage{enumerate}
\def\car{{\mathbf 1}}
%%%%%%%%%%%%%%%%%%%%%%%%%%%%%%%%%%%%%%%%%%%%%%%%%%%%%%%%%%%%%%%
%
% Definition des abreviations de symboles
%%%%%%%%%%%%%%%%%%%%%%%%%%%%%%%%%%%%%%%%%%%%%%%%%%%%%%%%%%%%%%%

\def\L{{\mathcal L}}
\def\half{1/2}
\def\/{\, | \,}
\def\n{\nabla}
\def\cn{\check{\nabla}}
\def\cu{\check{u}}

\def\ee{\epsilon}
\def\div{\delta}

\def\ee{\varepsilon}
\def\<{\langle}
\def\>{\rangle}
\def\n{\nabla}
\def\div{\delta}
\def\G{{\mathfrak G}}
\def\P{{\mathbf P}}

\def\H{{\mathbf  H}}

\def\WW{{\text{\tiny W}}}
\def\I{{\mathcal I}}
\def\K{{\mathcal K}}
\def\F{{\mathcal F}}

\def\R{{\mathbf R}}\def\N{{\mathbf N}}

\def\VV{\check{V}_T}
\def\CC{{\mathcal C}_0}
\def\LL{{\mathbb L}}
\def\comega{\check{\omega}}
\def\cdelta{\check{\delta}}
\def\d{\text{ d}}
\def\L{{\mathcal L}}
\def\({\Bigl(}
\def\){\Bigr)}
\def\E{{E^0}}
\def\cE{{\check{E}^0}}
%%%%%%%%%%%%%%%%%%%%%%%%%%%%%%%%%%%%%%%%%%%%%

\def\taut{\tau_{\scriptscriptstyle{T}}}

%%%%%%%%%%%%%%%%%%%%%%%%%%%%%%%%%%%%%%%%%%%%%
%\newcommand{\esp}[1]{{\mathbf E}[{#1}]}
\newcommand{\esp}[1]{{\mathbf E}\left[{#1}\right]}
\def\D{{\mathbb D}}
\newcommand{\trace}{\operatorname{trace}}
\newcommand{\Dom}{\operatorname{Dom}}
\newcommand{\Hol}{\operatorname{Hol}}
\newcommand{\Id}{\operatorname{Id}}

\newcommand{\id}{\operatorname{Id}}
\newcommand{\RE}{{\mathbf R}}

\newtheorem{theorem}{Theorem}[section]
\newtheorem{prop}[theorem]{Proposition}%
\newtheorem{corollary}[theorem]{Corollary}%[theorem]
\newtheorem{lemma}[theorem]{Lemma}%[theorem]
%{\theoremstyle{definition}
\newtheorem{defn}{Definition}[section]
\newtheorem{hyp}{Hypothesis}

%}

%{\theoremstyle{rem}
\newtheorem{rem}{Remark}[section]
\newtheorem{example}{Example}
%}

\begin{document}
\title{Time reversal of Volterra processes driven stochastic
  differential equations} \author{L. Decreusefond} \address{Institut
  TELECOM - TELECOM ParisTech - CNRS LTCI\\ Paris, France}
\email{Laurent.Decreusefond@telecom-paristech.fr} \date{}
\keywords{fractional Brownian motion, Malliavin calculus, stochastic
  differential equations, time reversal.}  \subjclass[2010]{60G18,
  60H10, 60H07, 60H05}
\thanks{The author would like to thank the anonymous referees for
  their thorough reviews, their comments and
  suggestions significatively contributed to improve the quality
  of this contribution.}

\begin{abstract}
  We consider stochastic differential equations driven by some
  Volterra processes. Under time reversal, these equations are
  transformed into past dependent stochastic differential equations
  driven by a standard Brownian motion. We are then in position to
  derive existence and uniqueness of solutions of the Volterra driven
  SDE considered at the beginning.
\end{abstract}
\maketitle{}
\markboth{Time reversal of fBm driven SDEs}{L. Decreusefond}
\section{Introduction}
\label{sec_introduction}
Fractional Brownian motion (fBm for short) of Hurst index $H\in [0,\, 1]$ is the Gaussian process which
admits the following representation: For any $t\ge 0$,
\begin{equation*}
  B^H(t)=\int_0^t K_H(t,\, s)\d B(s)
\end{equation*}
where $B$ is a one dimensional Brownian motion and $K_H$ is a
triangular kernel, i.e. $K_H(t,s)=0$ for $s>t$, the definition of
which is given in \eqref{defdekh}.
Fractional Brownian motion  is probably the first  process which
is not a semi-martingale and for which it is still interesting  to develop a stochastic
calculus. That means we want to define a stochastic integral and solve
stochastic differential equations driven by such a process. From the
very beginning of this program, two approaches do exist. One approach
is based on the   Hölder
continuity or the finite $p$-variation of the fBm sample-paths. The other way to proceed
relies on the gaussiannity of fBm.  The former is mainly deterministic
and was initiated by Zähle \cite{zahle98}, Feyel, de la Pradelle
\cite{feyel96} and Russo, Vallois \cite{russo95,russo96}.  Then, came
the notion of rough paths introduced by Lyons \cite{lyons98}, whose
application to fBm relies on the work of Coutin,
Qian\cite{MR1883719}. These works have been extended in the subsequent
works
\cite{MR2319719,DN:flots-06,MR2144230,MR2257130,MR2144234,MR2228694,MR2348055,MR2359060,MR2138713,MR2268434,MR2236631}.
A new way of thinking came with the independent but related works of
Feyel, de la Pradelle \cite{EJP2006-35} and Gubinelli
\cite{MR2091358}. The integral with respect to fBm was shown to exist
as the unique process satisfying some characterization (analytic in
the case of \cite{EJP2006-35}, algebraic in \cite{MR2091358}). As a
byproduct, this showed that almost all the existing integrals
throughout the literature were all the same as they all satisfy these
two conditions. Behind each approach but the last too, is a
construction of an integral defined for a regularization of fBm, then
the whole work is to show that under some convenient hypothesis, the
approximate integrals converge to a quantity which is called the
stochastic integral with respect to fBm. The main tool to prove the
convergence is either integration by parts in the sense of fractional
deterministic calculus, either enrichment of the fBm by some iterated
integrals proved to exist independently or by analytic continuation
\cite{Unterberger:2009rt,Unterberger:2009yq}.

In the probabilistic approach
\cite{MR2000m:60059,alos00,decreusefond02,MR1956051,decreusefond03.1,decreusefond96_2,MR2397797,MR1893308,MR2493996},
the idea is also to define an approximate integral and then prove its
convergence. It turns out that the key tool is here the integration by
parts in the sense of Malliavin calculus.

In dimension greater than one, with the deterministic approach, one
knows how to define the stochastic integral and prove existence and
uniqueness of fBm driven SDEs for fBm with Hurst index greater than
$1/4$. Within the probabilistic framework, one knows how to define a
stochastic integral for any value of $H$ but one cannot prove existence
and uniqueness of SDEs whatever the value of $H$. The primary
motivation of this work is to circumvent this problem.

In \cite{decreusefond03.1,decreusefond96_2}, we defined stochastic
integrals with respect to fBm as a ``damped-Stratonovitch'' integral
with respect to the underlying standard Brownian motion. This integral
is defined as the limit of Riemann-Stratonovitch sums, the convergence
of which is proved after an integration by parts in the sense of
Malliavin calculus. Unfortunately, this manipulation generates
non-adaptiveness: Formally the result can be expressed as
\begin{equation*}
  \int_0^t u(s)\circ \d B^H(s)=\delta(\K^*_t u)+\trace(\K^*_t \n u),
\end{equation*}
where $\K$ is defined by 
\begin{equation*}
  \K f(t)=\frac{d}{dt}\int_0^t K_H(t,\,s)f(s) \d s
\end{equation*}
and $\K^*_t$ is the adjoint of $\K$ in $\L^2([0,\, t],\, \R)$. In
particular, there exists $k$ such that 
\begin{equation*}
  \K^*_tf(s)=\int_s^t k(t,u) f(u)\d u
\end{equation*}
for any $f\in \L^2([0,\, t],\, \R)$ so that even if $u$ is adapted (with respect to the Brownian filtration), the
process $(s\mapsto \K^*_tu(s))$ is anticipative. However, the
stochastic integral process $(t\mapsto \int_0^t u(s)\circ \d B^H(s))$
remains adapted, hence, the anticipative aspect is, in some sense, artificial.
The motivation of this work is to show, that up to time reversal, we
can work with adapted process and Itô integrals.  The
time-reversal properties of fBm were already studied in
\cite{MR2346508} in a different context: It was shown there that the
time-reversal of the solution of an fBm-driven SDE of the form 
\begin{equation*}
  dY(t)=u(Y(t))\d t + \d B^H(t)
\end{equation*}
is still a process  of the same form. With a slight adaptation of our
method to fBm-driven SDEs with drift, one  should recover the main theorem
of \cite{MR2346508}.

 In what follows,
there is no restriction about the dimension but we need to assume that
 any component of $B^H$ is an fBm of  Hurst index  greater
than $1/2.$
Consider that we want to solve the equation \begin{equation}
  \label{eq:1}
  X_t = x + \int_0^t \sigma(X_s)\circ \d  B^H(s), \, 0\le t \le T
\end{equation}
where $\sigma$ is a deterministic function whose properties will be
fixed below. It turns out that it is essential to investigate the more
general equations:
\begin{equation}
  \label{eq:A}
  \tag{A}
  X_{r,\, t}=x+\int_r^t \sigma(X_{r,\, s})\circ \d B^H(s), \ 0\le r\le
  t\le T.
\end{equation}
The strategy is then the following: We will first consider the
reciprocal problem:
\begin{equation}
  \label{eq:B}
  \tag{B}
  Y_{r,\, t} =x-\int_r^t \sigma(Y_{s,\, t})\circ \d B^H(s),  \ 0\le r\le
  t\le T.
\end{equation}

The first critical point is that when we consider $\{Z_{r,\,
  t}:=Y_{t-r,\, t}, \, r\in [0,\, t]\},$ this process solves an
adapted, past dependent, stochastic differential equation with respect
to a standard Brownian motion. Moreover, because $K_H$ is
lower-triangular and sufficiently regular, the trace term vanishes in
the equation defining $Z$. We have then reduced the problem to an SDE
with coefficients dependent on the past, a problem which can be
handled by the usual contraction methods. We do not claim that the
results presented are new (for instance see the brilliant monograph \cite{MR2604669}
for detailed results obtained via rough paths theory) but it seems
interesting to have purely probabilistic methods which show that fBm
driven SDEs do have strong solutions which are
homeomorphisms. Moreover, the approach given here shows the
irreducible difference between the case $H<1/2$ and $H>1/2$ : The
trace term only vanishes in the latter situation, so that such an SDE is
merely a usual SDE with past-dependent coefficients. This
representation may be fruitful for instance, to analyze the  support and prove
the  absolute continuity of solutions of \eqref{eq:1}.

This paper is organized as
follows: After some preliminaries on fractional Sobolev spaces, often
called Besov-Liouville space, we address, in Section \ref{sec_stoch-integr}, the problem of
Malliavin calculus and time reversal. This part is interesting in its
own since stochastic calculus of variations is a framework oblivious
to time. Constructing such a notion of time is achieved using the
notion of resolution of the identity as introduced in
\cite{ustunel95_1,MR1071539}. We then introduce the second key ingredient which
is the notion of strict causality or quasi-nilpotence, see
\cite{zakai92} for a related application. In Section~\ref{sec_time-reversal}, we show that solving Equation
\eqref{eq:B} reduces to solve a past dependent stochastic differential
equation with respect to a standard Brownian motion, see Equation
\eqref{eq:C} below. Then, we prove
existence, uniqueness and some properties of this equation.  Technical
lemmas are postponed to Section \ref{sec:proofs}.

\section{Besov-Liouville Spaces}
\label{sec_preliminaries}

% \subsection{Fractional calculus}
% \label{sec_fractional-calculus}

Let $T>0$ be fix real number. For a measurable function $f\, :\, [0,\,
T]\to \R^n$, we define $\taut f$ by \begin{equation*} \taut
  f(s)=f(T-s) \text{ for any } s\in [0,\, T].
\end{equation*}
For $t\in [0,\, T]$, $e_tf$ will represent the restriction of $f$ to
$[0,\, t]$, i.e., $e_tf=f\car_{[0,\, t]}.$ For any linear map $A,$ we
denote by $A^*_{T},$ its adjoint in $\L^2([0,T];\, \R^n).$
% Furthermore, we introduce $\AA$ as $\AA=\taut A^*_T \taut .$
For $\eta\in (0,1],$ the space of $\eta$-Hölder continuous functions
on $[0,\, T]$ is equipped with the norm
\begin{equation*}
  \| f\|_{\Hol(\eta)}=\sup_{0<s<t<T}\frac{|f(t)-f(s)|}{|t-s|^\eta}+\| f\|_\infty.
\end{equation*}
Its topological dual is denoted by $\Hol(\eta)^*.$ For $f\in
\L^1([0,T];\, \R^n;\d t),$ (denoted by $\L^1$ for short) the left and
right fractional integrals of $f$ are defined by~:
\begin{align*}
  (I_{0^+}^{\gamma}f)(x) & =
  \frac{1}{\Gamma(\gamma)}\int_0^xf(t)(x-t)^{\gamma-1}\d t\ ,\ x\ge
  0,\\
  (I_{T^-}^{\gamma}f)(x) & =
  \frac{1}{\Gamma(\gamma)}\int_x^Tf(t)(t-x)^{\gamma-1}\d t\ ,\ x\le T,
\end{align*} where $\gamma>0$ and $I^0_{0^+}=I^0_{T^-}=\Id.$ For any
$\gamma\ge 0$, $p,q\ge 1,$ any $f\in \L^p$ and $g\in \L^q$ where
$p^{-1}+q^{-1}\le \gamma$, we have~: \begin{equation}
  \label{int_parties_frac}
  \int_0^T f(s)(I_{0^+}^\gamma g)(s)\d s = \int_0^T (I_{T^-}^\gamma 
  f)(s)g(s)\d s.
\end{equation}
The Besov-Liouville space $I^\gamma_{0^+}(\L^p):= \I_{\gamma,p}^+$ is
usually equipped with the norm~:
\begin{equation}
  \label{normedansIap}
  \|  I^{\gamma}_{0^+}f \| _{ \I_{\gamma,p}^+}=\| f\|_{\L^p}.
\end{equation}
Analogously, the Besov-Liouville space $I^\gamma_{T^-}(\L^p):=
\I_{\gamma,p}^-$ is usually equipped with the norm~:
\begin{equation*}
  \| I^{-\gamma}_{T^-}f \| _{ \I_{\gamma,p}^-}=\|  f\|_{\L^p}.
\end{equation*}
We then have the following continuity results (see
\cite{feyel96,samko93})~:
\begin{prop}
  \label{prop:proprietes_int_RL}
  \begin{enumerate}[i.]
  \item \label{inclusionLpLq} If $0<\gamma <1,$ $1< p <1/\gamma,$ then
    $I^\gamma_{0^+}$ is a bounded operator from $\L^p$ into $\L^q$
    with $q=p(1-\gamma p)^{-1}.$
  \item For any $0< \gamma <1$ and any $p\ge 1,$ $\I_{\gamma,p}^+$ is
    continuously embedded in $\Hol(\gamma- 1/p)$ provided that
    $\gamma-1/p>0.$
  \item For any $0< \gamma< \beta <1,$ $\Hol (\beta)$ is compactly
    embedded in $\I_{\gamma,\infty}.$
  \item For $\gamma p<1,$ the spaces $\I_{\gamma,p}^+$ and
    $\I_{\gamma,p}^{-}$ are canonically isomorphic. We will thus use
    the notation $\I_{\gamma,p}$ to denote any of this spaces.
  \end{enumerate}
\end{prop}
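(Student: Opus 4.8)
The four assertions are classical facts about Riemann–Liouville operators on a finite interval, and they are largely independent, so my plan is to recall, for each, the single estimate that drives it. For (i) I would recognise $I^\gamma_{0^+}f$ as the convolution of $f$ (extended by zero outside $[0,\, T]$) with the kernel $k_\gamma(u)=u^{\gamma-1}\car_{\{u>0\}}/\Gamma(\gamma)$. Since $k_\gamma$ lies in weak-$\L^{r}$ with $1/r=1-\gamma$, the Hardy–Littlewood–Sobolev inequality gives boundedness $\L^p\to\L^q$ with $1/q=1/p-\gamma$, that is $q=p(1-\gamma p)^{-1}$, precisely under $1<p<1/\gamma$. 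To prove the HLS estimate itself I would split $k_\gamma=k_\gamma\car_{\{u\le\lambda\}}+k_\gamma\car_{\{u>\lambda\}}$, bound the far piece by Young's inequality and the singular near piece by a weak-type estimate, interpolate via Marcinkiewicz, and optimise over $\lambda$; the finiteness of $[0,\, T]$ makes the far piece bounded, so only the singularity at $u=0$ requires care.

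For (ii), writing $f=I^\gamma_{0^+}\varphi$ with $\varphi\in\L^p$, I would estimate the increment by splitting
\begin{equation*}
\Gamma(\gamma)\bigl(f(t)-f(s)\bigr)=\int_s^t\varphi(u)(t-u)^{\gamma-1}\d u+\int_0^s\varphi(u)\bigl[(t-u)^{\gamma-1}-(s-u)^{\gamma-1}\bigr]\d u.
\end{equation*}
Hölder's inequality applied to the first term produces a bound $C\|\varphi\|_{\L^p}(t-s)^{\gamma-1/p}$, the integrability of $(t-u)^{(\gamma-1)p'}$ near $u=t$ being equivalent to exactly $\gamma p>1$, i.e. $\gamma-1/p>0$. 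The second term is the more delicate one, but a standard monotonicity and change-of-variables computation bounds it by the same modulus $(t-s)^{\gamma-1/p}$. Controlling $\|f\|_\infty$ in the same way then yields $\|f\|_{\Hol(\gamma-1/p)}\le C\|\varphi\|_{\L^p}=\|f\|_{\I_{\gamma,p}^+}$, which is the asserted continuous embedding.

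For (iii) I would first establish the continuous embedding $\Hol(\beta)\hookrightarrow\I_{\gamma,\infty}$ by inverting the fractional integral: the Marchaud derivative of order $\gamma$ of a $\beta$-Hölder function is bounded (indeed $(\beta-\gamma)$-Hölder) when $\gamma<\beta$, so every $g\in\Hol(\beta)$ equals $I^\gamma_{0^+}\varphi$ for some $\varphi\in\L^\infty$ with $\|\varphi\|_\infty\le C\|g\|_{\Hol(\beta)}$. Compactness then follows from the classical fact that a bounded family in $\Hol(\beta)$ is, by Arzelà–Ascoli together with the interpolation inequality $\|g\|_{\Hol(\gamma')}\le C\|g\|_\infty^{1-\gamma'/\beta}\|g\|_{\Hol(\beta)}^{\gamma'/\beta}$, relatively compact in $\Hol(\gamma')$ for every $\gamma'<\beta$; transporting this convergence through the identification above gives precompactness in $\I_{\gamma,\infty}$. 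I expect this part to be the main obstacle, since it combines the inversion of $I^\gamma_{0^+}$ on Hölder scales with an Arzelà–Ascoli argument and requires pinning down the correct norm on $\I_{\gamma,\infty}$.

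Finally, for (iv) the plan is to characterise both $\I_{\gamma,p}^+$ and $\I_{\gamma,p}^-$ intrinsically by the Gagliardo–Besov seminorm $\iint|f(x)-f(y)|^p|x-y|^{-1-\gamma p}\,\d x\,\d y$, which is manifestly invariant under the reflection $\taut$. The restriction $\gamma p<1$ is exactly the regime in which this intrinsic description holds with no boundary correction at the endpoints, so that $I^\gamma_{0^+}(\L^p)$ and $I^\gamma_{T^-}(\L^p)$ both coincide with the same Besov space and the identity map becomes the asserted canonical isomorphism. The subtlety here is to verify that the endpoint terms genuinely drop out precisely when $\gamma p<1$; all of these estimates are detailed in \cite{samko93}.
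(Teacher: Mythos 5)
The paper does not actually prove this proposition; it is imported verbatim from \cite{feyel96,samko93}, so your sketch has to be measured against the standard arguments in those references. Parts (i) and (ii) of your plan are correct and are the standard proofs: (i) is the Hardy--Littlewood--Sobolev theorem for the kernel $u^{\gamma-1}\in \L^{r,\infty}$ with $1/r=1-\gamma$, and in (ii) the Hölder estimate on the near term, with the exponent condition $(\gamma-1)p'>-1\iff \gamma-1/p>0$, together with the change-of-variables bound on the far term, is exactly how the embedding into $\Hol(\gamma-1/p)$ is obtained.

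There are, however, two genuine gaps. In (iii), the assertion that \emph{every} $g\in\Hol(\beta)$ equals $I^\gamma_{0^+}\varphi$ with $\varphi\in\L^\infty$ is false as stated: any $h=I^\gamma_{0^+}\varphi$ with $\varphi\in\L^\infty$ satisfies $|h(x)|\le \|\varphi\|_\infty x^\gamma/\Gamma(\gamma+1)$ and hence vanishes at $0$, so the constant function $\car$ is $\beta$-Hölder but not in the image. The Marchaud inversion you invoke is valid only for Hölder functions vanishing at the left endpoint; you must either subtract $g(0)$ (and agree that $\I_{\gamma,\infty}$ contains the constants), or observe that in this paper all paths are null at $0$. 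The compactness mechanism itself (Arzelà--Ascoli plus interpolation of Hölder norms, transported through the inversion) is sound. The more serious problem is (iv): the Gagliardo seminorm $\iint |f(x)-f(y)|^p|x-y|^{-1-\gamma p}\d x\d y$ is an intrinsic description of the Besov--Slobodetskii space $B^\gamma_{p,p}$, \emph{not} of the Liouville space $I^\gamma_{0^+}(\L^p)$; the two scales coincide only for $p=2$, and for $p\ne 2$ one has only one-sided (strict) embeddings between them. So your ``reflection-invariant characterization'' does not characterize the space in question, and the argument gives nothing for general $p$. The actual proof in \cite{samko93} shows that for $\gamma p<1$ the operator $I^{-\gamma}_{T^-}\circ I^{\gamma}_{0^+}$ is a bounded bijection of $\L^p$ with bounded inverse, by writing it explicitly as $\cos(\gamma\pi)\Id$ plus a singular integral operator with homogeneous kernel (a weighted finite Hilbert transform) that is bounded on $\L^p$ precisely in this range; that operator identity, not a Besov-norm characterization, is the key lemma you are missing.
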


%%%%%%%%%%%%%%%%%%%%%%%%%%%%%%%%%%%%%%%%%%%%%%
%
% Malliavin Calculus
%
%%%%%%%%%%%%%%%%%%%%%%%%%%%%%%%%%%%%%%%%%%%%%%%%

\section{Malliavin calculus and time reversal}
\label{sec_stoch-integr}

Our reference probability space is $\Omega=\CC([0,T],\, \R^n),$ the
space of $\R^n$-valued, continuous functions, null at time $0$. The
Cameron-Martin space is denoted by $\H$ and is defined as \begin{math}
  \H=I^1_{0^+}(\L^2([0,T])).
\end{math}
In what follows, the space $\L^2([0,\, T])$ is identified with its
topological dual. We denote by $\kappa$ the canonical embedding from
$\H$ into $\Omega$.  The probability measure $\P$ on $\Omega$ is such
that the canonical map $W\, :\, \omega \mapsto (\omega(t), \, t\in
[0,\, T])$ defines a standard $n$-dimensional Brownian motion. A
mapping $\phi$ from $\Omega$ into some separable Hilbert space
${\mathfrak H}$ is called cylindrical if it is of the form
$\phi(w)=\sum_{i=1}^df_i(\<v_{i,1},w\>,\cdots,\<v_{i,n},w\>) x_i$
where for each $i,$ $f_i\in {\mathcal C}_0^\infty (\RE^n, \RE)$ and
$(v_{i,j},\, j=1,\, \cdots,\, n)$ is a sequence of $\Omega^*.$ For
such a function we define $\n^\WW \phi$ as
$$
\n^\WW \phi(w)=\sum_{i,j=1} \partial_j
f_i(\<v_{i,1},w\>,\cdots,\<v_{i,n},w\>){\tilde{v}}_{i,j}\otimes x_i,
$$
where $\tilde{v}$ is the image of $v\in \Omega^*$ by the map
$(I^1_{0^+}\circ \kappa)^*.$ From the quasi-invariance of the Wiener
measure \cite{ustunel_book}, it follows that $\n^\WW $ is a closable
operator on $L^p(\Omega;{\mathfrak H})$, $p\geq 1$, and we will denote
its closure with the same notation. The powers of $\n^\WW $ are
defined by iterating this procedure. For $p>1$, $k\in \N$, we denote
by $\D_{p,k}({\mathfrak H})$ the completion of ${\mathfrak H}$-valued
cylindrical functions under the following norm
$$
\|\phi\|_{p,k}=\sum_{i=0}^k \|(\n^\WW) ^i\phi\|_{L^p(\Omega;\
  {\mathfrak H}\otimes \L^p([0,1])^{\otimes i})}\,.
$$
We denote by $\LL_{p,1}$ the space $\D_{p,1}(\L^p([0,\, T];\, \R^n)).$
The divergence, denoted $\div^\WW$ is the adjoint of $\n^\WW $: $v$
belongs to $\Dom_p \div^\WW$ whenever for any cylindrical $\phi,$
\begin{equation*}
  |\esp{\int_0^T v_s \n^\WW _s\phi\d s}|\le \, c \lVert \phi\rVert_{L^p}
\end{equation*}
and for such a process $v,$ $$\esp{\int_0^T v_s \n^\WW _s\phi\d
  s}=\esp{ \phi\, \div^\WW v}.$$
%%%%%%%%%%%%%%%%%%%%%%%%%%%%%%%%%%%%%%%%%%%%%%%%%%%%%%%%%%%%%
%
% Time reversal
%
%%%%%%%%%%%%%%%%%%%%%%%%%%%%%%%%%%%%%%%%%%%%%%%%%%%%%%%%%%%%%%
% We need first to introduce the ``time reversal'' operator, denoted
% by $\tau_T$ and defined by:
% \begin{align*}
%   \tau_T \, : \, \L^0([0,T]; \, \R^n) & \longrightarrow \L^0([0,T]; \, \R^n)\\
%   \omega & \longmapsto \omega(T-.) .\end{align*}
We introduced the temporary notation $W$ for standard Brownian motion to
clarify the forthcoming distinction between a standard Brownian motion
and its time reversal. Actually, the time reversal of a standard
Brownian is also a standard Brownian motion and thus, both of them
``live'' in the same Wiener space. We now precise how their respective
Malliavin gradient and divergence are linked. Consider $B=(B(t),\,
t\in [0,\, T])$ an $n$-dimensional standard Brownian motion and
$\check{B}^T=(B(T)-B(T-t),\, t\in [0,\, T])$ its time reversal.
Consider the following map \begin{align*}
  \Theta_T \, : \, \Omega & \longrightarrow \Omega\\
  \omega & \longmapsto \comega=\omega(T)-\taut\omega ,\end{align*} and
the commutative diagram \begin{equation*}
  % \begin{array}{c}
  \begin{CD}
    \L^2 @>\taut >> \L^2\\
    @V{I^1_{0^+}}VV @VV{I^1_{0^+}}V\\
    \Omega \supset \H\qquad @>>\Theta_T>\qquad \H\subset \Omega
  \end{CD}
\end{equation*}
Note that $\Theta_T^{-1}=\Theta_T$ since $\omega(0)=0.$ For a function
$f\in {\mathcal C}^\infty_b(\R^{nk})$, we define
\begin{align*}
  \n_r f(\omega(t_1),\cdots,\, \omega(t_k))&=\sum_{j=1}^k\partial_j f
  (\omega(t_1),\cdots,\, \omega(t_k)) \car_{[0,\, t_j]}(r)\text{ and }\\
  \cn_r f(\comega(t_1),\cdots,\, \comega(t_k))&=\sum_{j=1}^k\partial_j
  f (\comega(t_1),\cdots,\, \comega(t_k)) \car_{[0,\, t_j]}(r).
\end{align*} The operator $\n=\n^B$ (respectively
$\cn=\n^{\check{B}}$) is the Malliavin gradient associated with a
standard Brownian motion (respectively its time reversal). Since,
\begin{equation*}
  f(\comega(t_1),\cdots,\, \comega(t_k))=f(\omega(T)-\omega(T-t_1),\cdots,\, \omega(T)-\omega(T-t_k)),
\end{equation*}
we can consider $f(\comega(t_1),\cdots, \,\comega(t_k))$ as a
cylindrical function with respect to the standard Brownian motion. As
such its gradient is given by
\begin{equation*}
  \n_r f(\comega(t_1),\cdots,\, \comega(t_k))=\sum_{j=1}^k\partial_j f
  (\comega(t_1),\cdots, \comega(t_k))\car_{[T-t_j,\, T]}(r).
\end{equation*}
We thus have, for any cylindrical function $F$,
\begin{equation}
  \label{eq:4}
  \n F\circ \Theta_T(\omega)=\taut \cn F(\comega).
\end{equation}
Since $\Theta^*_T\P=\P$ and $\taut$ is continuous from $\L^p$ into
itself for any $p$, it is then easily shown that the spaces $\D_{p,\,
  k}$ and $\check{\D}_{p, \, k}$ (with obvious notations) coincide for
any $p,\, k$ and that \eqref{eq:4} holds for any element of one of
these spaces. Hence we have proved the following theorem:
\begin{theorem}
  \label{thm:cdelta}
  For any $p\ge 1$ and any integer $k$, the spaces $\D_{p,\, k}$ and
  $\check{\D}_{p, \, k}$ coincide. For any $F\in \D_{p,\, k}$ for some
  $p,\, k$,
  \begin{equation*}
    \n (F\circ \Theta_T)=\taut \cn (F\circ \Theta_T), \P \text{ a.s..}
  \end{equation*}
\end{theorem}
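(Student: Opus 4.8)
The plan is to promote the identity \eqref{eq:4}, already established for cylindrical functionals, to all of $\D_{p,k}$ by a density argument, the coincidence $\D_{p,k}=\check{\D}_{p,k}$ emerging as a byproduct of the fact that $\n$ and $\cn$ assign the same $\L^p$-norm to a cylindrical functional. Everything rests on two elementary facts. First, $\taut$ is an isometry of $\L^p([0,T];\R^n)$ for every $p$, being induced by the measure-preserving change of variable $s\mapsto T-s$; hence $\taut^{\ox i}$ is an isometry of $\L^p([0,T])^{\ox i}$. Second, $\Theta_T$ preserves $\P$, so that the composition operator $U\colon G\mapsto G\circ\Theta_T$ is an isometry of $L^p(\Omega)$; moreover $\comega=\Theta_T\omega$ then has a quasi-invariant law, which is what makes $\cn$ closable and $\check{\D}_{p,k}$ well defined. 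Finally, $\Theta_T$ is an involution sending cylindrical functionals to cylindrical functionals.

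First I would iterate \eqref{eq:4}. Since $\taut$ acts on the derivative variable it commutes with a subsequent application of $\n$, so differentiating \eqref{eq:4} $i$ times gives, for cylindrical $F$,
\[
   \n^i(F\circ\Theta_T)=\taut^{\ox i}\,\cn^i(F\circ\Theta_T),\qquad 0\le i\le k,
\]
pointwise in $\omega$. As $\taut^{\ox i}$ is a pointwise isometry, $\|\n^i(F\circ\Theta_T)(\omega)\|=\|\cn^i(F\circ\Theta_T)(\omega)\|$ for almost every $\omega$, so the $\D_{p,k}$- and $\check{\D}_{p,k}$-norms of $F\circ\Theta_T$ agree. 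Since $F\circ\Theta_T$ runs through all cylindrical functionals as $F$ does, these two norms coincide on a common dense set, and the completions coincide. This is the first assertion.

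Next I would extend the identity itself. Fix $F\in\D_{p,k}=\check{\D}_{p,k}$ and cylindrical $F_n\to F$. Comparing the definition of $\cn$ with the formula for $\n$ shows directly that $\cn(G\circ\Theta_T)=(\n G)\circ\Theta_T$ for cylindrical $G$; iterating gives $\cn^i(G\circ\Theta_T)=(\n^i G)\circ\Theta_T$, and combined with the previous norm identity and the $\P$-invariance of $\Theta_T$ this yields $\|F_n\circ\Theta_T\|_{p,k}=\|F_n\|_{p,k}$, i.e. $U$ is an isometry of $\D_{p,k}$. Hence $F_n\circ\Theta_T\to F\circ\Theta_T$ in $\D_{p,k}=\check{\D}_{p,k}$, both $\n(F_n\circ\Theta_T)$ and $\cn(F_n\circ\Theta_T)$ converge in $L^p$, and letting $n\to\infty$ in \eqref{eq:4} written for $F_n$, using the continuity of $\taut$, delivers the identity for $F$.

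The only genuinely delicate point is bookkeeping rather than analysis: one must check that the single $\taut$ of \eqref{eq:4} really distributes into $\taut^{\ox i}$ under repeated differentiation, which is exactly the assertion that $\taut$ and $\n$ act on distinct tensor slots and therefore commute. Once this commutation is made precise, the isometry of $\taut$ and the $\P$-invariance of $\Theta_T$ turn every remaining norm identification and limit passage into a formality.
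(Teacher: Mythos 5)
Your proof is correct and follows essentially the same route as the paper: the paper establishes identity \eqref{eq:4} for cylindrical functionals and then dispatches the extension to all of $\D_{p,\,k}$ in one sentence, invoking exactly the two facts you rely on, namely $\Theta_T^*\P=\P$ and the $\L^p$-continuity (indeed isometry) of $\taut$. Your write-up merely makes explicit the iteration of \eqref{eq:4} to higher-order gradients and the density/isometry argument that the paper labels ``easily shown.''
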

By duality, an analog result follows for divergences.
\begin{theorem}
  \label{thm:taut_delta}
  A process $u$ belongs to the domain of $\delta$ if and only if
  $\taut u$ belongs to the domain of $\check{\delta}$ and then, the
  following equality holds:
  \begin{equation}
    \label{eq:7}
    \check{\delta}(u(\comega))(\comega)=\delta(\taut
    u(\comega))(\omega)=\delta(\taut u\circ \Theta_T)(\omega).
  \end{equation}
\end{theorem}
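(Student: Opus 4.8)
The plan is to deduce \eqref{eq:7} from \thmref{thm:cdelta} by duality. First I would recast the gradient identity of \thmref{thm:cdelta} in operator form: putting $G=F\circ\Theta_T$ and noting that $G$ ranges over all of $\D_{p,\,k}=\check{\D}_{p,\,k}$ as $F$ does (because $\Theta_T$ is a norm-preserving involution of these spaces), the identity $\n(F\circ\Theta_T)=\taut\cn(F\circ\Theta_T)$ reads $\n G=\taut\cn G$, and applying the involution $\taut$ to both sides yields the equivalent form
\[
  \cn G=\taut\,\n G,\qquad G\in\D_{p,\,k}.
\]
In words, the two Malliavin gradients differ only by the time reversal $\taut$ acting on the $\L^2([0,T];\,\R^n)$ variable; I will use repeatedly that $\taut$ is self-adjoint on $\L^2([0,T];\,\R^n)$, is an isometry of every $\L^p$, and satisfies $\taut\taut=\Id$.

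Granting this, the divergence statement is a one-line duality computation. For a cylindrical test functional $G$ and a process $v$, I start from the defining relation of $\check\delta$, insert $\cn G=\taut\,\n G$, and move $\taut$ onto $v$ by self-adjointness (equivalently, by the change of variables $s\mapsto T-s$):
\begin{align*}
  \esp{G\,\check\delta v}
  &=\esp{\int_0^T v_s\,\cn_s G\,\d s}
  =\esp{\int_0^T (\taut v)_s\,\n_s G\,\d s}\\
  &=\esp{G\,\delta(\taut v)}.
\end{align*}
The outer members coincide for every test functional $G$, so $G\mapsto\esp{\int_0^T v_s\,\cn_s G\,\d s}$ satisfies the $\L^p$-continuity bound defining $\Dom\check\delta$ if and only if $G\mapsto\esp{\int_0^T (\taut v)_s\,\n_s G\,\d s}$ satisfies the one defining $\Dom\delta$. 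Hence $v\in\Dom\check\delta$ if and only if $\taut v\in\Dom\delta$, and in that case $\check\delta v=\delta(\taut v)$.

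It remains to cast this in the stated form. For the domain assertion I specialize to $v=\taut u$: since $\taut\taut=\Id$, the equivalence ``$v\in\Dom\check\delta\Leftrightarrow\taut v\in\Dom\delta$'' becomes ``$\taut u\in\Dom\check\delta\Leftrightarrow u\in\Dom\delta$''. For the identity \eqref{eq:7} I apply $\check\delta v=\delta(\taut v)$ to the process $v=u(\comega)=u\circ\Theta_T$. As $\taut$ acts on the time variable and $\Theta_T$ on the path, the two commute, so $\taut v=\taut u\circ\Theta_T$ and the identity becomes $\check\delta(u\circ\Theta_T)=\delta(\taut u\circ\Theta_T)$ as a single random variable; reading each side through its natural argument — $\comega$ on the left, $\omega$ on the right — is precisely \eqref{eq:7}.

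The only step demanding genuine care is the passage from the cylindrical identity $\cn G=\taut\,\n G$ to the corresponding statement at the level of the divergence. There I would invoke the coincidence $\D_{p,\,k}=\check{\D}_{p,\,k}$ and the closability of both gradients recorded before \thmref{thm:cdelta}, so that $\cn G=\taut\,\n G$ persists on the common domain, together with the $\L^p$-isometry property of $\taut$, which lets the continuity bound defining one domain be transported verbatim to the other. Everything else — self-adjointness of $\taut$, its commutation with $\Theta_T$, and the substitution $s\mapsto T-s$ — is elementary, so no analytic estimates are needed.
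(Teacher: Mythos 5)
Your proof is correct, and it reaches the conclusion by a route that differs from the paper's in its second half. Both arguments start identically: the gradient transfer rule of Theorem \ref{thm:cdelta} (in your rewritten form $\cn G=\taut\,\n G$), the self-adjointness of $\taut$ on $\L^2$, and a duality computation against cylindrical test functionals. The paper, however, only runs this duality computation for \emph{deterministic} $h\in\L^2$, establishing $\cdelta h(\comega)=\delta(\taut h)(\omega)$; it then extends to a general process $u$ by expanding the divergence in the series $\delta u=\sum_i\bigl((u,h_i)_{\L^2}\delta h_i-(\n u,h_i\otimes h_i)_{\L^2\otimes\L^2}\bigr)$ over an orthonormal basis, applying the deterministic case termwise and using that $(\taut h_i)_i$ is again an orthonormal basis. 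You instead run the duality argument directly for an arbitrary process $v$, observing that the two linear functionals $G\mapsto\esp{\int_0^T v_s\,\cn_s G\,\d s}$ and $G\mapsto\esp{\int_0^T(\taut v)_s\,\n_s G\,\d s}$ coincide, so the $L^p$-continuity bound that \emph{defines} membership in $\Dom\cdelta$ transfers verbatim to $\Dom\delta$ and vice versa. This is legitimate precisely because the paper defines the domain of the divergence by that bound, and because $\D_{p,k}=\check{\D}_{p,k}$ guarantees the two classes of test functionals are interchangeable (a point you rightly flag). Your version is arguably cleaner — it gets the domain equivalence and the identity in one stroke and avoids the basis expansion — while the paper's series argument makes the transfer explicit term by term; note that the paper's proof as written actually only verifies the identity for $u$ already assumed in $\Dom\delta$, so your functional-analytic phrasing is, if anything, better adapted to proving the stated ``if and only if.''
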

\begin{proof}
  For $u\in \L^2$, for cylindrical $F$, we have on the one hand:
  \begin{equation*}
    \esp{F(\comega)\cdelta u(\comega)}=\esp{( \cn F(\comega),\, u)_{\L^2}},
  \end{equation*}
  and on the other hand,
  \begin{align*}
    \esp{( \cn F(\comega),\, u)_{\L^2}}&=\esp{(\taut \n
      F\circ \Theta_T(\omega),\, u)_{\L^2}}\\
    &=\esp{(\n F\circ \Theta_T(\omega),\, \taut u)_{\L^2}}\\
    &=\esp{F\circ \Theta_T(\omega)\delta(\taut u)(\omega)}\\
    &=\esp{F(\comega)\delta(\taut u)(\omega)}.
  \end{align*}
  Since this is valid for any cylindrical $F$, (\ref{eq:7}) holds for
  $u\in \L^2$. Now, for $u$ in the domain of divergence (see
  \cite{nualart.book,ustunel_book}),
  \begin{equation*}
    \delta u = \sum_{i} \Bigl( (u,\, h_i)_{\L^2}\delta h_i-(\n
    u,h_i\otimes h_i)_{\L^2\otimes\L^2}\Bigr),
  \end{equation*}
  where $(h_i,\, i\in \N)$ is an orthonormal basis of $\L^2([0,\,
  T];\, \R^n)$. Thus, we have
  \begin{align*}
    \cdelta (u(\comega))(\comega) &= \sum_{i} \Bigl( (u(\comega),\,
    h_i)_{\L^2}\cdelta h_i(\comega)-(\cn
    u(\comega),h_i\otimes h_i)_{\L^2\otimes\L^2}\Bigr)\\
    &=\sum_{i} \Bigl( (u(\comega),\, h_i)_{\L^2}\delta(\taut
    h_i)(\omega)-(\n
    u(\comega),\taut h_i\otimes h_i)_{\L^2\otimes\L^2}\Bigr)\\
    &=\sum_{i} \Bigl( (\taut u(\comega),\, \taut
    h_i)_{\L^2}\delta(\taut h_i)(\omega)-(\n \taut u(\comega),\taut
    h_i\otimes \taut h_i)_{\L^2\otimes\L^2}\Bigr),
  \end{align*}
  where we have taken into account that $\taut$ in an
  involution. Since $(h_i,\, i\in \N)$ is an orthonormal basis of
  $\L^2([0,\, T];\, \R^n)$, identity (\ref{eq:7}) is satisfied for any
  $u$ in the domain of $\delta$.
\end{proof}

\subsection{Causality and quasi-nilpotence}
\label{sec:caus-quasi-nilp}
In anticipative calculus, the notion of trace of an operator plays a
crucial role, we refer to \cite{0635.47002} for more details on trace.
\begin{defn}
  Let $V$ be a bounded map from $\L^2([0,\, T];\, \R^n)$ into
  itself. The map $V$ is said to be trace-class whenever for one CONB
  $(h_n,\, n\ge 1)$ of $\L^2([0,\, T];\, \R^n)$,
  \begin{equation*}
    \sum_{n\ge 1} |(Vh_n,\, h_n)_{\L^2}|\text{ is finite.}
  \end{equation*}
  Then, the trace of $V$ is defined by
  \begin{equation*}
    \trace(V)= \sum_{n\ge 1} (Vh_n,\, h_n)_{\L^2}.
  \end{equation*}
\end{defn}
It is easily shown that the notion of trace does not depend on the
choice of the CONB.
\begin{defn}
  A family $E$ of projections $(E_\lambda$, $\lambda \in [0,1])$ in  $\L^2([0,\, T];\, \R^n)$ is called a resolution of the identity if it
  satisfies the conditions
  \begin{enumerate}
  \item $E_0=0$ and $E_1=\id$.
  \item $E_\lambda E_\mu=E_{\lambda\wedge \mu}$.
  \item $\lim_{\mu\downarrow \lambda}E_\mu =E_\lambda$ for any
    $\lambda\in [0,\, 1)$ and $\lim_{\mu\uparrow 1}E_\mu=\id.$
  \end{enumerate}
\end{defn}
% \begin{defn}
%   For any $s\in [0,\, T]$, we define $e_s$ by 
%   \begin{align*}
%     e_s\, :\, \L^2([0,\, T];\, \R^n) &\longrightarrow \L^2([0,\, T];\,
%     \R^n)\\
% f&\longmapsto f\car_{[0,\,s]}.
%   \end{align*}
% \end{defn}
For instance, the family $E=(e_{\lambda T}, \, \lambda\in [0,\, 1])$
is a resolution of the identity in $\L^2([0,\, T];\, \R^n).$
\begin{defn}
  A partition $\pi$ of $[0,\, T]$ is a sequence $\{0=t_0<t_1<\ldots
  <t_n=T\}$. Its mesh is denoted by $|\pi|$ and defined by
  $|\pi|=\sup_{i}|t_{i+1}-t_i|.$ 
% For $t\in \pi\backslash\{T\}$, $t_+$
%   is the least term of $\pi$ strictly greater than $t$.
\end{defn}
The causality plays a crucial role in what follows. The next
definition is just the formalization in terms of operator of the
intuitive notion of causality.
\begin{defn}
  % Let $E$ be a resolution of the identity in the Hilbert
  % space$\L^2([0,\, T];\, \R^n)$.
  A continuous map $V$ from $\L^2([0,\, T];\, \R^n)$  into itself is said
  to be $E$-causal if and only if the following condition holds:
  \begin{equation*}
    E_\lambda VE_\lambda = E_\lambda V \text{ for any } \lambda \in
    [0,\, 1].
  \end{equation*}
\end{defn}
For instance, an operator $V$ in
integral form $ Vf(t)=\int_0^T V(t,s)f(s)\d s$ is causal if and only
if $V(t,s)=0$ for $s\ge t$, i.e., computing $Vf(t)$ needs only the
knowledge of $f$ up to time $t$ and not after. Unfortunately, this
notion of causality is insufficient for our purpose and we are led to
introduce the notion of strict causality as in \cite{MR663906}.
\begin{defn}
  Let $V$ be a causal operator. It is a strictly causal operator
  whenever for any $\ee>0$, there exists a partition $\pi$ of $[0,T]$
  such that for any $\pi^\prime=\{0=t_0<t_1<\ldots<t_n=T\}\subset \pi$,
  \begin{equation*}
    \| (E_{t_{i+1}}-E_{t_i})V (E_{t_{i+1}}-E_{t_i})\|_{\L^2} < \ee,
    \text{ for } i=0,\cdots,\, n-1.
  \end{equation*}
\end{defn}
Note carefully that the identity map is causal but not strictly
causal. Indeed, if $V=\Id$, for any $s<t$,
\begin{equation*}
  \| (E_{t}-E_{s})V (E_{t}-E_{s})\|_{\L^2}= \|E_{t}-E_{s}\|_{\L^2}=1
\end{equation*}
since $E_{t}-E_{s}$ is a projection. However, if $V$
is hyper-contractive, we have the following result:
\begin{lemma}
  \label{lem:strict-causality}
   Assume the resolution of the
  identity to be either $E=(e_{\lambda T}, \, \lambda\in[0,\,1])$ or
  $E=(\Id-e_{(1-\lambda) T}, \, \lambda\in[0,\,1])$.  If $V$ is an
  $E$-causal map continuous from $\L^2$ into $\L^p$ for some $p>2$
  then $V$ is strictly $E$-causal.
\end{lemma}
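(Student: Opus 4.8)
The plan is to verify directly the defining estimate for strict causality, i.e.\ that the diagonal blocks $(E_{t'_+}-E_{t'})V(E_{t'_+}-E_{t'})$ have arbitrarily small $\L^2$-operator norm as soon as the mesh of $\pi$ is small enough. Note that $E$-causality is already granted; it enters only as the standing hypothesis that makes ``strictly causal'' meaningful, and it plays no role in the estimate itself. What the $\L^p$-continuity buys us, over mere $\L^2$-continuity, is precisely the decay of these blocks with the length of the underlying subinterval.

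First I would record that, for either admissible resolution, the relevant difference of projections is a localization operator. Identifying a partition point $t'\in[0,T]$ with the index $\lambda=t'/T$, one has $E_{t'}=e_{t'}$ in the first case and $E_{t'}=\Id-e_{T-t'}$ in the second. A short computation then shows that $\Delta:=E_{t'_+}-E_{t'}$ is multiplication by the indicator $\car_J$ of an interval $J$ of Lebesgue measure $|J|=t'_+-t'\le|\pi|$: the interval is $(t',\,t'_+]$ in the first case and $(T-t'_+,\,T-t']$ in the second. Consequently the argument will be uniform in the choice of resolution, both reducing to the same one-interval estimate.

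The key step is a Hölder interpolation on the short interval $J$, trading the $\L^p$-continuity of $V$ for smallness in $\L^2$-operator norm. For $f\in\L^2$ set $g:=V(\car_J f)$, so that $\Delta V\Delta f=\car_J\,g$. Since $p>2$,
\begin{equation*}
  \|\Delta V\Delta f\|_{\L^2}^2=\int_J|g|^2\,\d s \le |J|^{1-2/p}\Bigl(\int_J|g|^p\,\d s\Bigr)^{2/p}\le |J|^{1-2/p}\,\|V\|^2\,\|\car_J f\|_{\L^2}^2,
\end{equation*}
where $\|V\|$ denotes the operator norm of $V\colon\L^2\to\L^p$ and the last inequality uses $\|\car_J f\|_{\L^2}\le\|f\|_{\L^2}$. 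Taking the supremum over $f$ gives
\begin{equation*}
  \|\Delta V\Delta\|\le |J|^{1/2-1/p}\,\|V\|\le |\pi|^{1/2-1/p}\,\|V\|.
\end{equation*}
Because $1/2-1/p>0$, the right-hand side tends to $0$ with $|\pi|$. To conclude, given $\ee>0$ (the case $V=0$ being trivial) I would choose any partition $\pi$ with $|\pi|<(\ee/\|V\|)^{2p/(p-2)}$; the displayed bound then yields $\|(E_{t'_+}-E_{t'})V(E_{t'_+}-E_{t'})\|<\ee$ for every consecutive pair of $\pi$, hence for every $t'$ in any subfamily $\pi'\subset\pi$, which is exactly the required condition.

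I do not expect a genuine obstacle here. The only point demanding care is the bookkeeping in the second paragraph, showing that both admissible resolutions produce interval-restriction projections of the \emph{same} length $t'_+-t'$, so that the single Hölder estimate covers both cases at once; and one must keep in mind that the operator norm appearing in the definition of strict causality is the $\L^2\to\L^2$ norm, whereas the norm $\|V\|$ controlling it is the $\L^2\to\L^p$ norm, the gap between them being supplied exactly by the factor $|J|^{1/2-1/p}$.
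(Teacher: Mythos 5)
Your proof is correct and takes essentially the same route as the paper's: both apply Hölder's inequality on the short interval $J$ to convert the $\L^2\to\L^p$ continuity of $V$ into a factor $|J|^{1/2-1/p}$ that vanishes with the mesh. Your write-up is in fact tidier on the exponents and makes explicit the reduction of the second resolution to an interval of the same length, where the paper only asserts the symmetric argument.
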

\begin{proof}Let $\pi$ be any partition of $[0,\, T].$ Assume
  $E=(e_{\lambda T}, \, \lambda\in[0,\,1])$, the very same proof
 % (replacing $t_+$ by $t_-$ and reordering bounds in the integrals)
  works for the other mentioned resolution of the identity.  According
  to Hölder formula, we have: For any $ 0\le s<t\le T$,
  \begin{align*}
    \| (E_t-E_s) V (E_t-E_{s}) f\|_{\L^2}&=\int_s^{t}  |V(f \car_{(s,\, t]})(u)|^2\d u\\
    &\le (t-s)^{1-2/p} \|  V(f \car_{(s,\, t]})\|_{\L^{p/2}}\\
    &\le c\, (t-s)^{1-2/p} \|f\|_{\L^2}.
  \end{align*}
  Then, for any $\ee>0$, there exists $\eta>0$ such that $|\pi|<\eta $
  implies $\|  (E_{t_{i+1}}-E_{t_i})V (E_{t_{i+1}}-E_{t_i})
  f\|_{\L^2}\le \ee$ for any  $\{0=t_0<t_1<\ldots<t_n=T\}\subset \pi$
  and any $i=0,\cdots,\, n-1.$
\end{proof}
The importance of strict causality lies in the next theorem we borrow
from \cite{MR663906}.
\begin{theorem}
  \label{thm:nilpotent}
  The set of strictly causal operators coincides with the set of
  quasi-nilpotent operators, i.e., trace-class operators such that
  $\trace(V^n)=0$ for any integer $n\ge 1$.
\end{theorem}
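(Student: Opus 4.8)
The plan is to exploit the block-triangular structure that causality forces on $V$ relative to an arbitrary partition $\pi=\{0=t_0<\dots<t_N=T\}$, and then to read quasi-nilpotence off the smallness of the diagonal blocks. Write $\Delta_i=E_{t_{i+1}}-E_{t_i}$, so the $\Delta_i$ are mutually orthogonal projections summing to $\Id$. First I would record that causality, $E_\lambda VE_\lambda=E_\lambda V$, is equivalent to the vanishing of the strictly-upper blocks: taking $\lambda=t_{i+1}$ and using $E_\lambda E_\mu=E_{\lambda\wedge\mu}$ gives $E_{t_{i+1}}\Delta_j=0$ for $j>i$, whence $\Delta_iV\Delta_j=\Delta_iE_\lambda V\Delta_j=\Delta_i(E_\lambda VE_\lambda)\Delta_j=\Delta_iE_\lambda V(E_\lambda\Delta_j)=0$ for $j>i$. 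Thus $V$ is block lower-triangular for every $\pi$, and the same manipulation shows that each power $V^n$ is again causal, hence also block lower-triangular.

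For the implication strictly causal $\Rightarrow$ quasi-nilpotent, fix $\ee>0$ and choose, by strict causality, a partition with $\max_i\|\Delta_iV\Delta_i\|<\ee$. Decompose $V=D_\pi+L_\pi$ with $D_\pi=\sum_i\Delta_iV\Delta_i$ (block-diagonal) and $L_\pi=\sum_{i>j}\Delta_iV\Delta_j$ (strictly lower-triangular). Since a strictly decreasing chain of indices has length at most $N$, one has $L_\pi^{\,N}=0$, so $L_\pi$ is nilpotent; and $\|D_\pi\|=\max_i\|\Delta_iV\Delta_i\|<\ee$ because the blocks act on orthogonal subspaces. As $V=D_\pi+L_\pi$ is block lower-triangular, the spectrum of $V$ is the union of the spectra of the diagonal blocks $\Delta_iV|_{\Delta_iH}$, so its spectral radius is at most $\max_i\|\Delta_iV\Delta_i\|<\ee$. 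Since $\ee$ is arbitrary the spectral radius vanishes, i.e. the spectrum of $V$ reduces to $\{0\}$. When $V$ is trace-class this is exactly quasi-nilpotence: by Lidskii's theorem $\trace(V^n)=\sum_k\lambda_k(V)^n$, all eigenvalues $\lambda_k(V)$ vanish, and therefore $\trace(V^n)=0$ for every $n\ge1$.

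For the converse, suppose $V$ is a causal trace-class operator with $\trace(V^n)=0$ for all $n$. Newton's identities (equivalently, the fact that the only $\ell^1$ eigenvalue sequence with all power sums zero is the null sequence) force every eigenvalue to be $0$, so the spectrum of $V$ is $\{0\}$ and $V$ is quasi-nilpotent in the spectral sense. It remains to produce, for each $\ee$, a partition whose diagonal blocks are small; this is the Ringrose characterization of the radical of the nest algebra associated with $\{E_\lambda\}$. I would invoke the continuity of the resolution of the identity (property 3 of the definition, for the specific nests $E=(e_{\lambda T})$ or its reverse) to guarantee that the nest has no atoms, so that the diagonal seminorm $\inf_\pi\max_i\|\Delta_iV\Delta_i\|$ coincides with the spectral radius of $V$ inside the nest algebra; vanishing of the latter then yields strict causality.

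The main obstacle is precisely this converse: passing from the purely spectral fact that the spectrum is $\{0\}$ back to the geometric smallness of the diagonal blocks in operator norm. This step is not formal — it is the substance of Ringrose's theorem on the diagonal seminorm of a nest algebra, and it genuinely requires the continuity (atom-freeness) of the resolution of the identity, without which the diagonal blocks need not shrink, as the example $V=\Id$ already shows. I would therefore either cite this structural result (as is done here via \cite{MR663906}) or reproduce its argument, reducing everything else to the elementary block-triangular bookkeeping above. The forward direction, which is all that the sequel actually needs in order to kill the trace term in the equation defining $Z$, is self-contained modulo Lidskii's trace formula.
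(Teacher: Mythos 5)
Your proposal is sound, but note that the paper itself contains no proof of this statement: Theorem \ref{thm:nilpotent} is quoted from \cite{MR663906} ("we borrow from"), so everything you wrote is additional content rather than a variant of an internal argument. Your forward direction --- the only implication the sequel actually uses, in Theorem \ref{thm:trace_zero}, to kill the trace term --- is complete and correct: causality forces block lower-triangularity with respect to every partition, a block lower-triangular operator with invertible diagonal blocks is invertible (so the spectrum is \emph{contained in} --- not equal to, as you wrote, but containment is all you need --- the union of the spectra of the diagonal blocks), strict causality makes those blocks uniformly small, hence $\sigma(V)=\{0\}$, and Lidskii's theorem converts this into $\trace(V^n)=0$ for trace-class $V$. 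For the converse you correctly isolate the genuine content, namely Ringrose's characterization of the radical of a nest algebra, and citing it (which is in effect what the paper does via \cite{MR663906}) is the reasonable choice; reproving it would be a long detour. Two small corrections. First, $V=\Id$ illustrates the gap between causal and strictly causal, not the role of atoms; the counterexample showing that continuity of the nest is needed for the converse is a nilpotent rank-one operator supported on an atom, which is causal and quasi-nilpotent but has a diagonal block that cannot shrink. Second, the theorem as stated is loose: a strictly causal bounded operator need not be trace class, so the identification of quasi-nilpotence with ``trace class and $\trace(V^n)=0$ for all $n\ge 1$'' should be read, as you implicitly do, as an equivalence within the class of causal trace-class operators (or with spectral quasi-nilpotence in general); this is all that is required downstream.
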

Moreover, we have the following stability theorem.
\begin{theorem}
  \label{thm:stability}
  The set of strictly causal operators is a two-sided ideal in the set
  of causal operators.
\end{theorem}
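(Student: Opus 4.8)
The plan is to establish the two defining properties of an ideal: that the strictly causal operators form a linear subspace of the algebra of causal operators, and, as the substantial point, that this subspace absorbs composition with an arbitrary causal operator on either side. As a preliminary I would note that the causal operators are closed under composition: if $C,D$ are causal then, using $E_\lambda C=E_\lambda C E_\lambda$ and $E_\lambda D=E_\lambda D E_\lambda$,
\begin{equation*}
  E_\lambda CD=(E_\lambda C E_\lambda)D=E_\lambda C(E_\lambda D E_\lambda)=E_\lambda CD E_\lambda,
\end{equation*}
so $CD$ is causal and we really are working inside an algebra.

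For the subspace property, closure under scalar multiplication is trivial. Closure under addition rests on a monotonicity remark that I would prove first: if $\pi'$ refines $\pi$, then each increment projection $\Delta'=E_{s_+}-E_{s}$ of $\pi'$ sits below an increment $\Delta=E_{t_+}-E_{t}$ of $\pi$, whence $\Delta\Delta'=\Delta'\Delta=\Delta'$ and therefore $\Delta'V\Delta'=\Delta'(\Delta V\Delta)\Delta'$, giving $\|\Delta'V\Delta'\|\le\|\Delta V\Delta\|$. Thus the diagonal blocks only decrease under refinement, which both clarifies the partition quantifier and lets me handle sums: given $\ee>0$, pick a witness partition for $V$ and one for $W$, each controlling blocks by $\ee/2$, pass to their common refinement, and apply the triangle inequality to each block $(E_{t_+}-E_{t})(V+W)(E_{t_+}-E_{t})$.

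The heart of the matter is absorption, and the key is an algebraic collapse of the diagonal blocks. Fix a causal $C$ with $\|C\|=M$, a strictly causal $V$, and any $t<t_+$; write $\Delta=E_{t_+}-E_{t}$, a projection of norm at most one. From causality one has the elementary relations $E_t\Delta=\Delta E_t=0$, $E_{t_+}\Delta=\Delta E_{t_+}=\Delta$, and $E_\lambda C(\Id-E_\lambda)=0$, $E_\lambda V(\Id-E_\lambda)=0$ for $\lambda\in\{t,t_+\}$. Inserting the decomposition $\Id=E_t+\Delta+(\Id-E_{t_+})$ and feeding in these relations, I expect all cross terms to vanish, leaving
\begin{equation*}
  \Delta\,(CV)\,\Delta=\Delta C\Delta V\Delta,\qquad \Delta\,(VC)\,\Delta=\Delta V\Delta C\Delta .
\end{equation*}
For the first identity one uses $E_tV\Delta=0$ and $\Delta C(\Id-E_{t_+})=0$; for the second, symmetrically, $E_tC\Delta=0$ and $\Delta V(\Id-E_{t_+})=0$.

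With these identities the conclusion is immediate: since $\|\Delta\|\le 1$,
\begin{equation*}
  \|\Delta(CV)\Delta\|\le\|C\|\,\|\Delta V\Delta\|=M\,\|\Delta V\Delta\|,
\end{equation*}
and likewise $\|\Delta(VC)\Delta\|\le M\,\|\Delta V\Delta\|$. Hence, for given $\ee$, the witness partition making every diagonal block of $V$ smaller than $\ee/M$ simultaneously makes every diagonal block of $CV$ and of $VC$ smaller than $\ee$, block by block; this is exactly strict causality of the products. The main (and only minor) obstacle is getting the cross terms in the collapse identities to cancel cleanly and checking that the partition bookkeeping really does transfer from $V$ to the product — which works precisely because the bound above is increment-by-increment and independent of the chosen partition. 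I do not foresee a serious difficulty, the algebra being forced by causality; as an alternative one could route the argument through \thmref{thm:nilpotent} and quasi-nilpotence, but the direct computation seems cleaner.
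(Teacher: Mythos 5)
Your argument is correct, and it is worth noting that the paper itself offers no proof of this statement: Theorem \ref{thm:stability} is stated immediately after Theorem \ref{thm:nilpotent}, both being imported from the Feintuch--Saeks reference, so your proposal supplies a direct, self-contained verification rather than an alternative to an argument in the text. The two block-collapse identities are the right ones and do check out: writing $\Delta=E_{t_+}-E_t$ and inserting $\Id=E_t+\Delta+(\Id-E_{t_+})$, the term $\Delta C(\Id-E_{t_+})V\Delta$ dies because $\Delta=\Delta E_{t_+}$ and $E_{t_+}C(\Id-E_{t_+})=0$ by causality of $C$, while $\Delta CE_tV\Delta$ dies because $E_tV\Delta=E_tVE_t(E_{t_+}-E_t)=0$ by causality of $V$; the symmetric identity for $VC$ follows by exchanging the roles of $C$ and $V$. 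Together with $\|\Delta\|\le 1$ this gives the increment-by-increment bound $\|\Delta(CV)\Delta\|\le\|C\|\,\|\Delta V\Delta\|$, so the witness partition for $V$ at level $\ee/\|C\|$ works for both products (the case $C=0$ being trivial), and your preliminary remark that causal operators form an algebra guarantees the products stay in the ambient set. The one point you should make explicit is the reading of the quantifier ``for any $\pi'\subset\pi$'' in the definition of strict causality: your monotonicity lemma ($\Delta'\le\Delta$ implies $\|\Delta'V\Delta'\|\le\|\Delta V\Delta\|$) disposes of it only under the refinement reading, i.e.\ $\pi'$ finer than $\pi$. That reading is the one consistent with the paper's proof of Lemma \ref{lem:strict-causality}, which controls blocks by the mesh; under the literal set-inclusion reading ($\pi'$ coarser), a coarse increment $\Delta'$ is a sum of fine increments and $\Delta'V\Delta'$ picks up lower-triangular cross blocks $\Delta_iV\Delta_j$, $i>j$, that strict causality on the fine partition does not control, so your reduction would not go through as stated. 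A sentence fixing the convention, as the monotonicity remark implicitly does, closes this gap.
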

\begin{defn} Let $E$ be a resolution of the identity in  $\L^2([0,\, T];\, \R^n)$. Consider the filtration $\F^E$
  defined as
  \begin{equation*}
    \F^E_t=\sigma\{\div^\WW (E_\lambda h), \lambda\le t,\, h\in \L^2\}.
  \end{equation*}
  An $\L^2$-valued random variable $u$ is said to be $\F^E$-adapted if
  for any $h\in \L^2$, the real valued process $<E_\lambda u,\, h>$ is
  $\F^E$-adapted.  We denote by $\D_{p,k}^E({\mathfrak H})$ the set of
  $\F^E$-adapted random variables belonging to $\D_{p,k}( {\mathfrak
    H}).$
\end{defn}
If $E=(e_{\lambda T}, \, \lambda \in [0,1])$, the notion of $\F^E$
adapted processes coincides with the usual one for the Brownian filtration
and it is well known that a process $u$ is adapted if and only if
$\n^\WW _r u(s)=0$ for $r>s.$ This result can be generalized to any
resolution of the identity.
\begin{theorem}[Proposition 3.1 of \protect\cite{ustunel95_1}]
  Let $u$ belongs to $\LL_{p,1}$. Then $u$ is $\F^E$-adapted if and
  only if $\n^\WW u$ is $E$-causal.
\end{theorem}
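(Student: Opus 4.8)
The plan is to reduce the operator-theoretic condition of $E$-causality to a statement, pointwise in $\lambda$, about $\F^E_\lambda$-measurability of scalar functionals, exploiting that the projections $E_\lambda$ are self-adjoint. First I would unwind the filtration. Since $E_\mu E_\lambda = E_{\mu\wedge\lambda} = E_\mu$ whenever $\mu\le\lambda$, each generator $\div^\WW(E_\mu h)$ with $\mu\le\lambda$ has integrand $E_\mu h = E_\lambda(E_\mu h)\in E_\lambda\L^2$, while the choice $\mu=\lambda$ already produces every element of $E_\lambda\L^2$ as $h$ ranges over $\L^2$. Hence the span of the admissible integrands is exactly the range of $E_\lambda$ and
\[
  \F^E_\lambda=\sigma\{\div^\WW g:\ g\in E_\lambda\L^2\}.
\]
Because $E_\lambda$ is an orthogonal projection, $E_\lambda\L^2$ and $(\Id-E_\lambda)\L^2$ are orthogonal, so the two Gaussian families $\{\div^\WW g: g\in E_\lambda\L^2\}$ and $\{\div^\WW g: g\in (\Id-E_\lambda)\L^2\}$ are independent.

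The crux is the scalar characterization, which generalizes the classical Brownian fact recalled just before the statement: for $F\in\D_{p,1}$, $F$ is $\F^E_\lambda$-measurable if and only if $(\Id-E_\lambda)\n^\WW F=0$. The forward implication follows from the chain rule applied to cylindrical functionals of the generators $\div^\WW g$, $g\in E_\lambda\L^2$ (each such gradient lies in $E_\lambda\L^2$), together with a closure argument in $\D_{p,1}$. The converse is the step I expect to be the main obstacle: assuming $(\Id-E_\lambda)\n^\WW F=0$, for every $g\in(\Id-E_\lambda)\L^2$ the directional derivative $\<\n^\WW F,g\>_{\L^2}=\<(\Id-E_\lambda)\n^\WW F,g\>_{\L^2}=0$ vanishes, so by Cameron–Martin quasi-invariance $F$ is a.s.\ invariant under the corresponding shifts; combined with the independence noted above this forces $F$ to be measurable with respect to $\sigma\{\div^\WW g:g\in E_\lambda\L^2\}=\F^E_\lambda$. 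The hypothesis $u\in\LL_{p,1}$ is what guarantees that each $\<E_\lambda u,h\>$ lies in $\D_{p,1}$, so that this characterization applies, and that $\n^\WW u$ is a genuine $\L^2\otimes\L^2$ object.

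With this in hand the theorem becomes a short computation. Fix $h\in\L^2$ and $\lambda$, and view $V:=\n^\WW u$ as the random Hilbert–Schmidt operator on $\L^2$ with kernel $(s,r)\mapsto\n^\WW_r u(s)$. Passing the gradient through the bounded deterministic maps $E_\lambda$ and $\<\cdot,h\>$ (which act on the time variable and commute with $\n^\WW$), and using that $E_\lambda$ is self-adjoint, one obtains $\n^\WW\<E_\lambda u,h\>=V^* E_\lambda h$. By the scalar characterization, $\<E_\lambda u,h\>$ is $\F^E_\lambda$-measurable for every $h$ if and only if $(\Id-E_\lambda)V^* E_\lambda=0$, that is $E_\lambda V^* E_\lambda=V^* E_\lambda$; taking adjoints, this is precisely $E_\lambda V E_\lambda=E_\lambda V$. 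Letting $\lambda$ range over $[0,1]$, $u$ is $\F^E$-adapted exactly when $V=\n^\WW u$ is $E$-causal, which is the assertion.
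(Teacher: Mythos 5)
The paper offers no proof of this statement: it is imported verbatim as Proposition 3.1 of \"Ust\"unel--Zakai \cite{ustunel95_1}, so there is nothing internal to compare your argument against. On its own terms, your route is the standard one and the reductions you perform are correct: the identification $\F^E_\lambda=\sigma\{\div^\WW g:\ g\in E_\lambda\L^2\}$, the computation $\n^\WW\<E_\lambda u,h\>=(\n^\WW u)^*E_\lambda h$, and the passage by adjunction from $E_\lambda V^*E_\lambda=V^*E_\lambda$ for all $\lambda$ to the paper's causality condition $E_\lambda VE_\lambda=E_\lambda V$ are all sound; the hypothesis $u\in\LL_{p,1}$ is indeed what makes $\<E_\lambda u,h\>$ an element of $\D_{p,1}$ and $\n^\WW u$ a genuine kernel.

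Two caveats. First, the whole weight of the proof rests on the scalar characterization ``$F\in\D_{p,1}$ is $\F^E_\lambda$-measurable if and only if $(\Id-E_\lambda)\n^\WW F=0$'', and in particular on its converse, which you correctly flag as the main obstacle but only sketch. The step from ``all directional derivatives along $(\Id-E_\lambda)\H$ vanish, hence $F$ is invariant under those shifts'' to ``$F$ is measurable with respect to the complementary $\sigma$-field'' is not a formal consequence of the independence of the two Gaussian families: one needs either the ergodicity of the Cameron--Martin shifts acting on the factor associated with $(\Id-E_\lambda)\L^2$ (conditionally on the other factor), or a chaos-expansion argument showing directly that $F=\esp{F\/\F^E_\lambda}$. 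That converse is essentially the content of the quoted proposition itself, so as written your proof defers rather than discharges the hard part. Second, adaptedness yields, for each fixed $\lambda$ and $h$, an almost-sure identity $(\Id-E_\lambda)(\n^\WW u)^*E_\lambda h=0$, whereas $E$-causality of the random operator $\n^\WW u$ must hold on a single null-set complement simultaneously for all $\lambda\in[0,1]$; you need the separability of $\L^2$ and the right-continuity of $\lambda\mapsto E_\lambda$ to exchange the quantifiers. Neither point is fatal, but both must be written out for the argument to be complete.
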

We then have the following key theorem:
\begin{theorem}
  \label{thm:trace_zero}Assume the resolution of the identity to be
  $E=(e_{\lambda T}, \, \lambda\in[0,\,1])$ either
  $E=(\Id-e_{(1-\lambda) T}, \, \lambda\in[0,\,1])$ and that $V$
  is an $E$-strictly causal continuous operator from $\L^2$ into $\L^p$ for some $p>2$. Let $u$ be an element of $\D_{2,1}^E(\L^2).$
  Then, $V\n^\WW u $ is of trace class and we have $\trace(V\n^\WW
  u)=0$.
\end{theorem}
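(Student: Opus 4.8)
The plan is to transfer everything to the deterministic theory of causal operators developed above, working $\omega$ by $\omega$, and then to combine the ideal structure of strictly causal operators with their characterization as quasi-nilpotent operators. Concretely, for $\P$-almost every $\omega$ the quantity $\n^\WW u(\omega)$ is the kernel of a fixed bounded operator on $\L^2([0,T];\R^n)$, and the whole statement will follow by applying Theorem \ref{thm:stability} and Theorem \ref{thm:nilpotent} to this operator composed with $V$.

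First I would record the two structural facts needed for a fixed $\omega$. On the one hand, since $u\in\D_{2,1}^E(\L^2)\subset\D_{2,1}(\L^2)$, the gradient $\n^\WW u$ lies in $\L^2(\Omega;\L^2\otimes\L^2)$, so for almost every $\omega$ the map $f\mapsto\n^\WW u(\omega)f$ is a Hilbert--Schmidt, hence bounded, operator on $\L^2$; this is what makes the notions of causality meaningful for it. On the other hand, as $u$ is by definition $\F^E$-adapted, the criterion recalled above, Proposition 3.1 of \cite{ustunel95_1}, shows that $\n^\WW u$ is $E$-causal, again as an almost sure property of the random operator $\n^\WW u(\omega)$.

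Next I would invoke the ideal property. Since $V$ is $E$-strictly causal by hypothesis and $\n^\WW u(\omega)$ is $E$-causal, Theorem \ref{thm:stability} shows that the composition $V\,\n^\WW u(\omega)$ is itself $E$-strictly causal for almost every $\omega$. It then remains to convert strict causality into the trace statement: by Theorem \ref{thm:nilpotent} an $E$-strictly causal operator is quasi-nilpotent, that is, trace-class with $\trace\bigl((V\n^\WW u(\omega))^n\bigr)=0$ for every integer $n\ge 1$; taking $n=1$ gives $\trace(V\n^\WW u)=0$, $\P$ almost surely.

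The step I expect to require the most care is the passage to trace class, and this is exactly where the hypothesis that $V$ be continuous from $\L^2$ into $\L^p$ with $p>2$ is used: combined with the Hilbert--Schmidt nature of $\n^\WW u(\omega)$ it guarantees that the product $V\,\n^\WW u(\omega)$ falls within the class of operators to which the characterization of Theorem \ref{thm:nilpotent} applies, by an integrability gain of the same kind as in the proof of Lemma \ref{lem:strict-causality}. The other point to keep honest is the measurability and almost-sure bookkeeping, namely that causality, strict causality and the vanishing of the trace all hold simultaneously off a single $\P$-null set, so that the identity $\trace(V\n^\WW u)=0$ is a genuine almost sure statement.
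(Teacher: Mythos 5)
Your argument is correct and is essentially the paper's own proof: adaptedness of $u$ gives $E$-causality of $\n^\WW u$ via Proposition 3.1 of \cite{ustunel95_1}, the ideal property of Theorem \ref{thm:stability} makes $V\n^\WW u$ strictly causal, and Theorem \ref{thm:nilpotent} identifies strictly causal operators with quasi-nilpotent (hence trace-class, trace-zero) ones. The extra remarks on the Hilbert--Schmidt realization of $\n^\WW u(\omega)$ and the almost-sure bookkeeping are sensible glosses but do not change the route.
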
 \begin{proof} Since $u$ is adapted, $\n^\WW u$ is
  $E$-causal. According to Theorem \ref{thm:stability}, $V\n^\WW u$ is
  strictly causal and the result follows by Theorem
  \ref{thm:nilpotent}.
\end{proof}
In what follows, $\E$ is the resolution of the identity in the Hilbert
space $\L^2$ defined by $e_{\lambda\, T} f=f\car_{[0,\, \lambda T]}$
and $\cE$ is the resolution of the identity defined by \begin{math}
  \check{e}_{\lambda T}f = f\car_{[(1-\lambda)T,T]}.
\end{math}
The filtration $\F^\E$ and $\F^\cE$ are defined accordingly.  Next
lemma is immediate when $V$ is given in the form $Vf(t)=\int_0^t
V(t,\, s) f(s)\d s$. Unfortunately such a representation as an
integral operator is not always available. We give here an algebraic
proof to emphasize the importance of causality.
\begin{lemma} \label{lem:causalite} Let $V$ be a map from $\L^2([0,\,
  T];\, \R^n)$ into itself such that $V$ is $\E$-causal. Let $V^*$ be
  the adjoint of $V$ in $\L^2([0,\, T];\, \R^n)$. Then, the map $\taut
  V^*_T\taut$ is $\cE$-causal.
\end{lemma}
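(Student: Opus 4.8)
The plan is to argue entirely at the level of operators, since the hypothesis and conclusion are both phrased through the multiplication projections and the reversal map. I will lean on three elementary facts. First, $\taut$ is a self-adjoint involution of $\L^2([0,\,T];\,\R^n)$: it squares to the identity because $T-(T-s)=s$, and $(\taut f,\,g)_{\L^2}=(f,\,\taut g)_{\L^2}$ by the change of variable $u=T-s$. Second, the projections $e_\lambda$ and $\check e_\lambda$ are multiplications by indicator functions, hence self-adjoint. Third, reversal intertwines the two resolutions of the identity: since $\car_{[0,\,\lambda T]}(T-s)=\car_{[(1-\lambda)T,\,T]}(s)$ for a.e.\ $s$, a one-line computation gives $\taut e_\lambda \taut=\check e_\lambda$, equivalently $\taut e_\lambda=\check e_\lambda\taut$ and $e_\lambda\taut=\taut\check e_\lambda$. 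I would also record the complement identity $\check e_\lambda=\Id-e_{1-\lambda}$, which is the hinge of the argument.

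Next I would transfer the causality of $V$ into an anti-causality of its adjoint. Taking the adjoint of the defining relation $e_\lambda V e_\lambda=e_\lambda V$ and using $e_\lambda^*=e_\lambda$ yields $e_\lambda V^*_T e_\lambda=V^*_T e_\lambda$ for every $\lambda$. Writing $\check e_\mu=\Id-e_{1-\mu}$ and expanding, the difference $\check e_\mu V^*_T\check e_\mu-\check e_\mu V^*_T$ equals $-(\Id-e_{1-\mu})V^*_T e_{1-\mu}=-(V^*_T e_{1-\mu}-e_{1-\mu}V^*_T e_{1-\mu})$, which vanishes precisely by the adjoint relation just obtained. Hence $V^*_T$ is $\cE$-causal, i.e.\ $\check e_\mu V^*_T\check e_\mu=\check e_\mu V^*_T$ for all $\mu$.

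Finally I would conjugate by $\taut$. Using $e_\mu\taut=\taut\check e_\mu$ and $\taut e_\mu=\check e_\mu\taut$, one computes $e_\mu(\taut V^*_T\taut)e_\mu=\taut\,\check e_\mu V^*_T\check e_\mu\,\taut$, and the $\cE$-causality of $V^*_T$ collapses the central factor, giving $\taut\,\check e_\mu V^*_T\,\taut=e_\mu(\taut V^*_T\taut)$. This is exactly the assertion that $\taut V^*_T\taut$ is $\E$-causal, which is the conclusion sought (with $E=\E$).

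The computations are routine; the only genuine difficulty is bookkeeping. One must keep straight that time reversal sends $\E$ to $\cE$ while adjunction sends causality to anti-causality, and verify that these two reversals cancel rather than compound. The step most prone to a sign or index slip is the passage through $\check e_\lambda=\Id-e_{1-\lambda}$, since it is this complement identity that turns the anti-causality $e_\lambda V^*_T e_\lambda=V^*_T e_\lambda$ into honest $\cE$-causality of $V^*_T$; everything else is mechanical manipulation of the intertwining relations.
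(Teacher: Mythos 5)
Your proof is correct and follows essentially the same route as the paper's: both arguments are purely algebraic, resting on the self-adjointness of $\taut$ and of the projections together with the intertwining identity $\taut e_r=(\Id-e_{T-r})\taut$, and on transferring causality through adjunction. The only cosmetic difference is that you factor the computation through the intermediate statement that $V^*_T$ is $\cE$-causal before conjugating by $\taut$, whereas the paper performs the same manipulations in a single chain after first reducing, by adjunction, to the identity $e_r\taut V\taut e_r=\taut V\taut e_r$.
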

\begin{proof}
  This is a purely algebraic lemma once we have noticed that
  \begin{equation}
    \label{eq:2}
    \taut
    e_r=(\id-e_{T-r})\taut \text{ for any } 0\le r \le T.
  \end{equation}
  For, it suffices to write
  \begin{multline}\label{eq:8}
    \taut e_r f(s)=f(T-s)\car_{[0,\, r]}(T-s)\\
    =f(T-s)\car_{[T-r,\, T]}(s)=(\id-e_{T-r})\taut f(s), \text{ for
      any } 0\le s \le T.
  \end{multline}
  We have to show that
  \begin{equation*}
    e_r \taut V^*_T \taut e_r=e_r \taut V^*_T \taut  \text{ or
      equivalently } e_r \taut V \taut e_r =\taut V \taut e_r,
  \end{equation*}
  since $e_r^*=e_r$ and $\taut^*=\taut.$ Now, \eqref{eq:8} yields
  \begin{equation*}
    e_r \taut V \taut e_r = \taut V \taut e_r- e_{T-r} V \taut e_r.
  \end{equation*}
  Use \eqref{eq:8} again to obtain
  \begin{equation*}
    e_{T-r} V \taut e_r=e_{T-r} V (\Id -e_{T-r})\taut=(e_{T-r} V -e_{T-r}  V e_{T-r})\taut=0,
  \end{equation*}
  since $V$ is $E$-causal.
\end{proof}
%%%%%%%%%%%%%%%%%%%%%%%%%%%%%%%%%%%%%%%%%%%%%%%%%%%%%%%%%%%
%
% Définition de X
%
%%%%%%%%%%%%%%%%%%%%%%%%%%%%%%%%%%%%%%%%%%%%%%%%%%%%%%%%%%

\subsection{Stratonovitch integrals}
\label{sec:stoch-integr-with}
In what follows, $\eta$ belongs to $(0,1]$ and $V$ is a linear
operator. For any $p\ge 2$, we set: \begin{hyp}[$p,\, \eta$]
  \label{hypA}
  The linear map $V$ is continuous from $\L^p([0,\, T]; \R^n)$ into
  the Banach space $\Hol(\eta)$.
\end{hyp}
\begin{defn} Assume that Hypothesis \ref{hypA}($p,\, \eta$) holds. The
  Volterra process associated to $V$, denoted by $W^V$ is defined by
  \begin{equation*}
    W^V(t)=\div^\WW \bigl(V(\car_{[0,\, t]})\bigr), \text{ for all } t \in [0, \, T].
  \end{equation*}
\end{defn}
For any subdivision $\pi$ of $[0,\, T]$, i.e., $\pi =\{0=t_0<t_1<
\ldots<t_n=T\}$, of mesh $|\pi|$,
% let
% \begin{equation*}
%   B^\pi(t)=B(t_i)+\frac{1}{\theta_i}\div^\WW  B_i \, (t-t_i) \text{ for } t\in[t_i,t_{i+1}),
% \end{equation*}
% and
% \begin{equation*}
%   X^\pi(t)=\sum_{t_i\in\pi}\frac{1}{\theta_i} V(\car_{[t_i,t_{i+1}]})(t)
%   \div^\WW  B_i ,
% \end{equation*}
% where
% \begin{equation*}
%   \theta_i =t_{i+1}-t_i \text{ and } \div^\WW  B_i =B_{t_{i+1}}-B_{t_i}.
% \end{equation*}
% For a process $u$,
we consider the Stratonovitch sums: \begin{multline}
  R^\pi(t,u)=\div^\WW\Bigl(\sum_{t_i\in\pi}\frac{1}{\theta_i}
  \int_{t_i\wedge t}^{t_{i+1}\wedge t} \kern-3pt Vu(r) \d r\, \car_{[t_i,\, t_{i+1})}\Bigr)\\
  + \sum_{t_i\in\pi}\frac{1}{\theta_i} \iint\limits_{[t_i\wedge
    t,t_{i+1}\wedge t]^2} V(\n^\WW _ru)(s) \d s\d r.
  \label{eq:def_de_RpiT}
\end{multline}
\begin{defn}
  \label{def:strat-integr}
  We say that $u$ is $V$-Stratonovitch integrable on $[0,t]$ whenever
  the family $\text{R}^\pi(t,u),$ defined in (\ref{eq:def_de_RpiT}),
  converges in probability as $|\pi|$ goes to $0.$ In this case the
  limit will be denoted by $\int_0^t u(s) \circ \d W^V(s).$ \end{defn}
% For instance, as regards fBm, we should take
% $V=(I_{0^+}^{-1}K_H)^*_T$ to define stochastic integration with
% respect to $B_H$, see \cite{decreusefond03.1}.
\begin{example}
  \label{fbm_levy}
  The first example is the so-called L\'evy fractional Brownian motion
  of Hurst index $H>1/2$, defined as
  \begin{displaymath}
    \frac{1}{\Gamma(H+1/2)} \int_0^t (t-s)^{H-1/2}\d B_s=\div(I_{T^-}^{H-1/2}(\car_{[0,\, t]})).
  \end{displaymath}
  This amounts to say that $V=I_{T^-}^{H-1/2}.$ Thus Hypothesis
  \ref{hypA}$(p,H-1/2-1/p)$ holds provided $p(H-1/2)>1$.
\end{example}
\begin{example}
  \label{fbm}
  The other classical example is the fractional Brownian motion with
  stationary increments of Hurst index $H>1/2,$ which can be written
  as
  \begin{equation*}
    \int_0^t K_H(t,s)\d B(s),
  \end{equation*}
  where
  \begin{equation}
\label{defdekh}
    K_H(t,r)=\frac{(t-r)^{H- \frac{1}{2}}}{\Gamma(H+\frac{1}{2})}
    F(\frac{1}{2}-H,H-\frac{1}{2}, H+\frac{1}{2},1- 
    \frac{t}{r})1_{[0,t)}(r).
  \end{equation}
  The Gauss hyper-geometric function $F(\alpha,\beta,\gamma,z)$ (see
  \cite{nikiforov88}) is the analytic continuation on ${\mathbb
    C}\times {\mathbb C}\times {\mathbb C} \backslash \{-1,-2,\ldots
  \}\times \{z\in {\mathbb C}, Arg |1-z| < \pi\}$ of the power series
  \begin{displaymath}
    \sum_{k=0}^{+ \infty} \frac{(\alpha)_k(\beta)_k}{(\gamma)_k 
      k!}z^k,
  \end{displaymath}
  and
  \begin{displaymath}
    (a)_0=1 \text{ and } (a)_k = 
    \frac{\Gamma(a+k)}{\Gamma(a)}=a(a+
    1)\dots (a+k-1).
  \end{displaymath}
  We know from \cite{samko93} that $K_H$ is an isomorphism from $\L^p
  ([0,1])$ onto $\I_{H+1/2,p}^+$ and
  \begin{equation*}
    K_Hf = 
    I_{0^+}^1x^{H-\half}I_{0^+}^{H-\half}x^{\half-H}f .
  \end{equation*}
  Consider $\K_H=I_{0^+}^{-1}\circ K_H.$ Then it is clear that
  \begin{equation*}
    \int_0^t K_H(t,\, s)\d B(s)=\int_0^t (\K_H)_T^*(\car_{[0,t]})(s)\d B(s),
  \end{equation*}
  hence that we are in the framework of Definition
  \ref{def:strat-integr} provided that we take
  $V=(\K_H)_T^*$. Hypothesis \ref{hypA}$(p,H-1/2-1/p)$ is satisfied
  provided that $p(H-1/2)>1.$
\end{example}
% \begin{example}\label{ex:gfbm}
%   Beyond these two well known cases, we can investigate the case of
%   multi-fractional Brownian motion defined as
%   \begin{equation*}
%     \int_0^t K_{H(t)}(t,s)\d B(s),
%   \end{equation*}
%   for a deterministic function $H.$ Consider the linear map $M_H$
%   defined by
%   \begin{align*}
%     M_H\, :\, \L^2&\longrightarrow \L^2\\
%     f&\longmapsto \int_0^t K_{H(t)}(t,\, s)f(s)\d s.
%   \end{align*}
%   The next result is extracted from \cite{decreusefond03.1}.
%   \begin{theorem}
%     \label{thm:fbm_generalise}
%     Let $H$ belong to $\Hol(\eta)$ for some $\eta>1/2$ and be such
%     that $\inf_t H(t)>\eta.$ Then $M_H$ is continuous from $\L^p$ into
%     $I_{\eta+1/2,p}^+.$
%   \end{theorem}
%   If we consider ${\mathcal M}_H=I^{-1}_{0^+}\circ M_H$, it holds that
%   \begin{equation*}
%     \int_0^t K_{H(t)}(t,s)\d B(s)=\int_0^t ({\mathcal M}_H)^*_T(\car_{[0,t]})(s)\d B(s).
%   \end{equation*}
%   For, we remark that
%   \begin{equation*}
%     (  {\mathcal M}_H)^*_T=(M_H)^*_T\circ I^{-1}_{T^-}=-(M_H)^*_T\circ I^{-1}_{0^+}.
%   \end{equation*}
%   Extending all the operators to distributions, we can write
%   \begin{equation*}
%     (  {\mathcal M}_H)^*_T(\car_{[0,t]})(s)=(M_H)^*_T\ee_t(s)=\int_0^T K_{H(r)}(r,s)\d\ee_t(r)=K_{H(t)}(t,s).
%   \end{equation*}
%   Thus, Hypothesis \ref{hypA}$(p,\eta-1/2-1/p)$ is satisfied with
%   $V=({\mathcal M}_H)^*_T $ provided that $p(\eta-1/2)>1.$
% \end{example}
% Note that in this last example we do not have an expression of $V$ as
% a kernel operator. This justifies the operator-theoretic approach of
% the sequel.

The next theorem then follows from \cite{decreusefond03.1}.
% For the sake of completeness, we reproduce its proof.
\begin{theorem}
  \label{thm:strato_integrable}
  Assume that Hypothesis \ref{hypA}($p,\, \eta$) holds. Assume that
  $u$ belongs to $\LL_{p,1}.$ Then $u$ is $V$-Stratonovitch
  integrable, there exists a process which we denote by $D^\WW u$ such
  that $D^\WW u$ belongs to $L^p(\P\otimes d s)$ and
  \begin{equation}
    \label{eq:6}
    \int_0^T u(s) \circ \d W^V(s)=\div^\WW (Vu)+ \int_0^T D^\WW u(s)\d s.
  \end{equation}
  The so-called ``trace-term'' satisfies the following estimate:
  \begin{equation}
    \esp{  \int_0^T |D^\WW u(r)|^p \d r}\le \, c\, T^{p\eta}\| u\|_{\LL_{p,\, 1}}^p,\label{eq:121}
  \end{equation}
  for some universal constant $c$.  Moreover, for any $r\le T$, $e_ru$
  is $V$-Stratonovitch integrable and
  \begin{equation*}
    \int_0^r u(s) \circ \d W^V(s)= \int_0^T (e_ru)(s) \circ \d W^V(s)=\div^\WW (Ve_ru)+ \int_0^r D^\WW u(s)\d s
  \end{equation*}
  and we have the maximal inequality:
  \begin{equation}
    \label{eq:3}
    \esp{    \|\int_0^. u(s)\circ \d W^V(s)\|_{\Hol(\eta)}^p}\le \, c\, \|u\|_{\LL_{p,1}}^p,
  \end{equation}
  where $c$ does not depend on $u$.
\end{theorem}
The main result of this Section is the following theorem which states
that the time reversal of a Stratonovitch integral is an adapted
integral with respect to the time reversed Brownian motion. Due to
its length, its proof is postponed to Section \ref{sec:substitution-formula}.
\begin{theorem}
  \label{thm:int_strato_inverse}
  Assume that Hypothesis \ref{hypA}($p,\, \eta$) holds.  Let $u$
  belong to $\LL_{p,1}$ and let $\VV=\tau_t V\tau_T$.  Assume
  furthermore that $V$ is $\check{E}_0$-causal and that $\cu=u\circ
  \Theta_T^{-1}$ is $\F^{\check{E}_0}$-adapted. Then,
  \begin{equation}
    \label{eq:9}
    \int_{T-t}^{T-r} \taut u(s)\circ \d W^V(s) =\int_r^t \VV(\car_{[r,\, t]} \check{u})(s)\d \check{B}^T(s), \ 0\le r\le t\le T,
  \end{equation}
  where the last integral is an Itô integral with respect to the time
  reversed Brownian motion
  $\check{B}^T(s)=B(T)-B(T-s)=\Theta_T(B)(s).$
\end{theorem}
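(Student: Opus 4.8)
The plan is to apply the decomposition of Definition~\ref{def:integrale} to the integrand $\taut u$ over $[T-t,\,T-r]$, writing
\begin{equation*}
  \int_{T-t}^{T-r}\taut u(s)\circ\d W^V(s)=\div^\WW\bigl(V(e_{T-r}-e_{T-t})\taut u\bigr)+\int_{T-t}^{T-r}D^\WW(\taut u)(s)\,\d s ,
\end{equation*}
and then to show separately that the trace (second) term vanishes and that the divergence (first) term is exactly the claimed Itô integral with respect to $\check B^T$. Throughout I would rely on the conjugation identity read off from~\eqref{eq:2}, namely $\taut\cE_\lambda\taut=\E_\lambda$, which shows that $V$ being $\cE$-causal forces $\VV=\taut V\taut$ to be $\E$-causal; combined with the continuity $V:\L^2\to\Hol(\eta)\hookrightarrow\L^p$ for some $p>2$, Lemma~\ref{lem:strict-causality} upgrades these causalities to strict ones.

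For the trace term, Theorem~\ref{thm:existence_trace} identifies it with $\trace\bigl((e_{T-r}-e_{T-t})V\taut\,\n^\WW u\bigr)$. The first task is to unscramble the time reversal sitting in the integrand slot: using Lemma~\ref{lem:taut_trace} and Corollary~\ref{lem:taut} I would transfer the trailing $\taut$ onto the derivative slot, and by Theorem~\ref{thm:cdelta} (in the form $\cn=\taut\n^\WW$) this replaces $\n^\WW u$ by $\cn u$, giving $\trace\bigl((e_{T-r}-e_{T-t})V\,\cn u\bigr)$. Appealing again to Theorem~\ref{thm:cdelta} one has $\cn u=(\n^\WW\cu)\circ\Theta_T$, so, $V$ and the projection being deterministic and $\Theta_T$ preserving $\P$, this trace equals $\Psi\circ\Theta_T$ with $\Psi=\trace\bigl((e_{T-r}-e_{T-t})V\,\n^\WW\cu\bigr)$. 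Now $(e_{T-r}-e_{T-t})V$ is strictly $\cE$-causal (Lemma~\ref{lem:strict-causality} together with the ideal property, Theorem~\ref{thm:stability}) and $\cu$ is $\F^\cE$-adapted by hypothesis, so Theorem~\ref{thm:trace_zero} applied with the resolution $\cE$ yields $\Psi=0$ a.s., whence the trace term vanishes.

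For the divergence term I would first rewrite, using~\eqref{eq:2} and $V\taut=\taut\VV$,
\begin{equation*}
  V(e_{T-r}-e_{T-t})\taut u=\taut\,\VV(e_t-e_r)u ,
\end{equation*}
and then apply the divergence time-reversal of Theorem~\ref{thm:taut_delta} to convert $\div^\WW$ into the divergence $\cdelta$ relative to $\check B^T$, the integrand becoming $\VV(e_t-e_r)\cu=\VV(\car_{[r,\,t]}\cu)$. Finally, since $\cu$ is $\F^\cE$-adapted and $\VV$ is $\E$-causal, the process $\VV(\car_{[r,\,t]}\cu)$ is $\F^\cE$-adapted, i.e.\ adapted to the forward filtration of $\check B^T$; hence its $\cdelta$-divergence coincides with the Itô integral $\int_r^t\VV(\car_{[r,\,t]}\cu)(s)\,\d\check B^T(s)$, which is the right-hand side of~\eqref{eq:9}. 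Collecting the two contributions proves the identity.

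The main obstacle is the vanishing of the trace term. The difficulty is that the time reversal $\taut$ inside $\taut u$ destroys causality with respect to \emph{either} resolution, so Theorem~\ref{thm:trace_zero} cannot be invoked directly; the whole point is to move $\taut$ across the trace (Corollary~\ref{lem:taut}) and to pass, through $\Theta_T$, from $\cn u$ to $\n^\WW\cu$, thereby realigning every operator with the single resolution $\cE$ for which $V$ is strictly causal and $\cu$ is adapted. I would stress that this has to be run at the operator level via quasi-nilpotence (Theorem~\ref{thm:nilpotent}): because $V$ need not be a kernel operator (cf.\ Example~\ref{ex:gfbm}), the naive ``zero diagonal'' support heuristic is unavailable, and it is genuinely the strict causality of the composition, rather than any pointwise cancellation, that forces the trace to be zero.
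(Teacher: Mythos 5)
Your proof is correct and follows essentially the same route as the paper's: the same divergence-plus-trace decomposition from Definition \ref{def:integrale}, the same commutation identities \eqref{eq:2} and $V\taut=\taut\VV$ combined with Theorem \ref{thm:taut_delta} to turn the divergence term into the Itô integral, and the same strict-causality/quasi-nilpotence mechanism (Lemmas \ref{lem:strict-causality} and \ref{lem:causalite}, Theorems \ref{thm:stability} and \ref{thm:trace_zero}, Corollary \ref{lem:taut}, Theorem \ref{thm:cdelta}) to kill the trace term. The only cosmetic difference is directional: you transport the trace term from the $W^V$ side over to the $\check{B}^T$ side before applying Theorem \ref{thm:trace_zero}, whereas the paper first establishes quasi-nilpotence of the operator on the $\check{B}^T$ side and then transports it back; the ingredients and the logical content are identical.
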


\begin{rem}
  Note that at a formal level, we could have an easy proof of this
  theorem. For instance, consider the L\'evy fBm, a simple
  computations shows that $\VV=I^{H-1/2}_{0^+}$ for any $T.$ Thus, we
  are led to compute $\trace(I^{H-1/2}_{0^+}\n u)$. If we had
  sufficient regularity, we could write
  \begin{equation*}
    \trace(I^{H-1/2}_{0^+}\n u)=\int_0^T \int_0^s (s-r)^{H-3/2}\n_s u(r)\d r\d s=0 ,
  \end{equation*}
  since $\n_s u(r)=0$ for $s>r$ for $u$ adapted. Obviously, there are
  many flaws in these lines of proof: The operator $I^{H-1/2}_{0^+}\n
  u$ is not regular enough for such an expression of the trace to be
  true. Even more, there is absolutely no reason for $\VV\n u$ to be a
  kernel operator so we can't hope such a formula. These are the
  reasons that we need to work with operators and not with kernels.
\end{rem}
\section{Volterra driven SDEs}
\label{sec_time-reversal}
Let $\G$ the group of homeomorphisms of $\R^n$ equipped with the
distance: We introduce a distance $d$ on $\G$ by
\begin{equation*}
  d(\varphi,\, \phi)=\rho(\varphi,\, \phi)+\rho(\varphi^{-1},\, \phi^{-1}),
\end{equation*}
where
\begin{equation*}
  \rho(\varphi,\, \phi)=\sum_{N=1}^\infty 2^{-N} \frac{\sup_{|x|\le N}|\varphi(x)-\phi(x)|}{1+\sup_{|x|\le N}|\varphi(x)-\phi(x)|}\cdotp
\end{equation*}
Then, $\G$ is a complete topological group.  Consider the equations
\begin{equation}
  \label{eq:Ap}
  \tag{A}
  X_{r,\, t}=x+\int_r^t \sigma(X_{r,\, s})\circ \d W^V(s), \ 0\le r\le
  t\le T.
\end{equation}
\begin{equation}
  \label{eq:B1}
  \tag{B}
  Y_{r,\, t} =x-\int_r^t \sigma(Y_{s,\, t})\circ \d W^V(s),  \ 0\le r\le
  t\le T.
\end{equation}
As a solution of \eqref{eq:Ap} is to be constructed by ``inverting'' a solution
of \eqref{eq:B1}, we need to add to the definition of a solution of \eqref{eq:Ap} or
\eqref{eq:B1} the requirement of being a flow of homeomorphisms. This is
the meaning of the following definition.

\begin{defn}
  By a solution of \eqref{eq:A}, we mean a measurable map
  \begin{align*}
    \Omega \times [0,T] \times [0,T] & \longrightarrow \G\\
    (\omega,\, r,\, t) & \longmapsto (x\mapsto X_{r,t}(\omega,x))
  \end{align*}
  such that the following properties are satisfied~:
  \begin{enumerate}
  \item For any $0\le r \le t \le T$, for any $x\in \R^n$, $X_{r,\,
      t}(\omega,\, x)$ is $\sigma\{W^V(s), r\le s\le t\}$-measurable,
  \item For any $0\le r \le T$, for any $x\in \R^n$, the processes
    $(\omega,\, t)\mapsto X_{r,t}(\omega,\, x)$ and $(\omega,\,
    t)\mapsto X_{r,t}^{-1}(\omega,\, x)$ belong to $\LL_{p,1}$ for
    some $p\ge 2$.
  \item For any $0\le r\le s\le t,$ for any $x\in \R^n$, the following
    identity is satisfied:
    \begin{equation*}
      X_{r,t}(\omega,\, x)=X_{s,t}(\omega,\, X_{r,s}(\omega,\, x)).
    \end{equation*}
  \item Equation \eqref{eq:A} is satisfied for any $0\le r\le t\le T$
    $\P$-a.s.. \end{enumerate}
\end{defn}
\begin{defn}
  By a solution of \eqref{eq:B}, we mean a measurable map
  \begin{align*}
    \Omega \times [0,T] \times [0,T] & \longrightarrow \G\\
    (\omega,\, r,\, t) & \longmapsto (x\mapsto Y_{r,t}(\omega,x))
  \end{align*}
  such that the following properties are satisfied~:
  \begin{enumerate}
  \item For any $0\le r \le t \le T$, for any $x\in \R^n$, $Y_{r,\,
      t}(\omega,\, x)$ is $\sigma\{W^V(s), r\le s\le t\}$-measurable,
  \item For any $0\le r \le T$, for any $x\in \R^n$, the processes
    $(\omega,\, r)\mapsto Y_{r,t}(\omega,\, x)$ and $(\omega,\,
    r)\mapsto Y_{r,t}^{-1}(\omega,\, x)$ belong to $\LL_{p,1}$ for
    some $p\ge 2$.
    % \item For any $0\le r\le s\le t,$ for any $x\in \R^n$, the
    %   following identity is satisfied:
    %   \begin{equation*}
    %     Y_{r,t}(\omega,\, x)=Y_{r,s}(\omega,\, Y_{s,t}(\omega,\,
    %     x)).
    %   \end{equation*}
  \item Equation \eqref{eq:B} is satisfied for any $0\le r\le t\le T$
    $\P$-a.s..
  \item For any $0\le r\le s\le t,$ for any $x\in \R^n$, the following
    identity is satisfied:
    \begin{equation*}
      Y_{r,t}(\omega,\, x)=Y_{r,s}(\omega,\, Y_{s,t}(\omega,\, x)).
    \end{equation*} 
  \end{enumerate}
\end{defn}
At last consider the equation, for any $0\le r\le t\le T$,
\begin{equation}
  \label{eq:C}
  \tag{C}
  Z_{r,\, t}=x-\int_r^t \VV( \sigma\circ Z_{.,t}\ \car_{[r,t]})(s)\d \check{B}^T(s)
\end{equation}
where $B$ is a standard $n$-dimensional Brownian motion.
\begin{defn}
  By a solution of \eqref{eq:C}, we mean a measurable map
  \begin{align*}
    \Omega \times [0,T] \times [0,T] & \longrightarrow \G\\
    (\omega,\, r,\, t) & \longmapsto (x\mapsto Z_{r,t}(\omega,x))
  \end{align*}
  such that the following properties are satisfied~:
  \begin{enumerate}
  \item For any $0\le r \le t\le T$, for any $x\in \R^n$, $Z_{r,\,
      t}(\omega,\, x)$ is $\sigma\{\check{B}^T (s), s\le r\le t
    \}$-measurable,
  \item For any $0\le r \le t \le T$, for any $x\in \R^n$, the
    processes $(\omega,\, r)\mapsto Z_{r,t}(\omega,\, x)$ and
    $(\omega,\, r)\mapsto Z_{r,t}^{-1}(\omega,\, x)$ belong to
    $\LL_{p,1}$ for some $p\ge 2$.
    % \item For any $0\le r\le s\le t,$ for any $x\in \R^n$, the
    %   following identity is satisfied:
    %   \begin{equation*}
    %     Z_{r,t}(\omega,\, x)=Z_{s,t}(\omega,\, Z_{r,s}(\omega,\,
    %     x)).
    %   \end{equation*}
  \item Equation \eqref{eq:C} is satisfied for any $0\le r\le t\le T$
    $\P$-a.s..
  \end{enumerate}
\end{defn}

\begin{theorem}
  \label{thm:solution_C}
  Assume that $\VV$ is an $E^0$ causal map continuous from $\L^p$ into
  $\I_{\alpha,p}$ for $\alpha >0$ and $p\ge 4$ such that $\alpha p>1.$
  Assume $\sigma$ is Lipschitz continuous and sub-linear, see Eqn.
  \eqref{eq:22} for the definition.  Then, there exists a unique
  solution to equation (\ref{eq:C}).  Let $Z$ denote this
  solution. For any $(r,\, r^\prime)$,
  \begin{equation*}
    \esp{|Z_{r,T}-Z_{r^\prime, T}|^p}\le c |r-r^\prime|^{p\eta}.
  \end{equation*}
  Moreover,
  \begin{equation*}
    (\omega,\, r)\mapsto Z_{r,s}(\omega, Z_{s,t}(\omega,\, x))\in
    \LL_{p,1}, \text{ for any } r\le s\le t\le T.
  \end{equation*}
\end{theorem}
Since this proof needs several lemmas, we defer it to Section
\ref{sec:equation-refeq:c}.
% \begin{proof}

% \end{proof}

\begin{theorem}
  \label{thm:correspondance_solution_B_C}
  Assume that $\VV$ is an $E^0$ causal map continuous from $\L^p$ into
  $\I_{\alpha,p}$ for $\alpha >0$ and $p\ge 2$ such that $\alpha p>1.$
  For fixed $T$, there exists a bijection between the space of
  solutions of Equation (\ref{eq:B1}) on $[0,T]$ and the set of
  solutions of Equation (\ref{eq:C}).
\end{theorem}
\begin{proof}
  Set
  \begin{equation*}
    Z_{r,T}(\comega,\, x)=Y_{T-r,T}(\Theta_T^{-1}(\comega),\, x)
  \end{equation*}
  or equivalently
  \begin{equation}\label{eq:20}
    Y_{r,T}(\omega,\, x)=Z_{T-r,T}(\Theta_T(\omega),\, x).
  \end{equation}
  According to Theorem \ref{thm:int_strato_inverse}, $Y$ is satisfies
  (\ref{eq:B1}) if and only if $Z$ satisfies (\ref{eq:C}). The
  regularity properties are immediate since $\L^p$ is stable by
  $\taut.$
\end{proof}
The first part of the next result is then immediate.
\begin{corollary}
  \label{thm:solution_B} Assume that $\VV$ is an $E^0$ causal map
  continuous from $\L^p$ into $\I_{\alpha,p}$ for $\alpha >0$ and $p\ge
  2$ such that $\alpha p>1.$ Then Equation (\ref{eq:B1}) has one and
  only solution and for any $0\le r\le s\le t,$ for any $x\in \R^n$,
  the following identity is satisfied:
  \begin{equation}\label{eq:14}
    Y_{r,t}(\omega,\, x)=Y_{r,s}(\omega,\, Y_{s,t}(\omega,\, x)).
  \end{equation}
\end{corollary}
\begin{proof}
  According to Theorem \ref{thm:correspondance_solution_B_C} and
  \ref{thm:solution_C}, \eqref{eq:B1} has at most one solution since
  \eqref{eq:C} has a unique solution. As to the existence, point (1)
  to (3) are immediately deduced from the corresponding properties of
  $Z$ and Equation \eqref{eq:20}.

  According to Theorem \ref{thm:solution_C}, $(\omega,\, r)\mapsto
  Y_{r,s}(\omega,\, Y_{s,t}(\omega,\, x))$ belongs to $\LL_{p,1}$
  hence we can apply the substitution formula and we get:
  \begin{multline}\label{eq:13}
    Y_{r,s}(\omega,\, Y_{s,t}(\omega,\, x))=Y_{s,t}(\omega,\, x)-\left.\int_r^s \sigma(Y_{\tau,s}(\omega,\, x))\circ \d W^V(\tau)\right|_{ x=Y_{s,t}(\omega,\, x)}\\
    =x-\int_s^t \sigma(Y_{\tau,t}(\omega,\, x)\circ \d W^V(\tau)\\
    -\int_r^s \sigma(Y_{\tau,s}(\omega,\, Y_{s,t}(\omega,\, x)))\circ
    \d W^V(\tau).
  \end{multline}
  Set
  \begin{equation*}
    R_{\tau,t}=
    \begin{cases}
      Y_{\tau,t}(\omega,\, x)& \text{ for } s\le \tau \le t\\
      Y_{\tau,s}(\omega,\, Y_{s,t}(\omega,\, x)) & \text{ for } r\le
      \tau \le s.
    \end{cases}
  \end{equation*}
  Then, in view of (\ref{eq:13}), $R$ appears to be the unique
  solution (\ref{eq:B1}) and thus $R_{s,t}(\omega,\,
  x)=Y_{s,t}(\omega,\, x).$ Point (4) is thus proved.
\end{proof}
\begin{corollary}
  For $x$ fixed, the random field $(Y_{r,t}(x),\, 0\le r\le t\le T) $
  admits a continuous version. Moreover,
  \begin{equation*}
    \esp{|Y_{r,s}(x)-Y_{r^\prime,s^\prime}(x)|^p}\le c (1+|x|^p) ( |s^\prime-s|^{p\eta}+|r-r^\prime|^{p\eta}).
  \end{equation*}
  We still denote by $Y$ this continuous version.
\end{corollary}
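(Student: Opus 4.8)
The plan is to derive the quantitative two-parameter moment estimate and then read off the continuous version from the Kolmogorov--\v Centsov criterion on the index set $\{(r,s)\,:\,0\le r\le s\le T\}$; since the exponent $p\eta$ can be made larger than the dimension $2$ of this set under the standing hypotheses (enlarging $p$ if necessary), the estimate produces a continuous modification. To prove the estimate I would split the increment along the two parameters,
\begin{equation*}
Y_{r,s}(x)-Y_{r^\prime,s^\prime}(x)=\bigl(Y_{r,s}(x)-Y_{r^\prime,s}(x)\bigr)+\bigl(Y_{r^\prime,s}(x)-Y_{r^\prime,s^\prime}(x)\bigr),
\end{equation*}
and bound the two brackets separately by means of $|a+b|^p\le 2^{p-1}(|a|^p+|b|^p)$.

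For the first bracket (variation of the initial time at fixed terminal time, say $r<r^\prime\le s$), Equation \eqref{eq:B1} gives directly $Y_{r,s}(x)-Y_{r^\prime,s}(x)=-\int_{r}^{r^\prime}\sigma(Y_{\tau,s}(x))\circ \d W^V(\tau)$, because the integrand $\sigma(Y_{\tau,s}(x))$ is common to both. This is an increment of the process $t\mapsto\int_0^t\sigma(Y_{\tau,s}(x))\circ\d W^V(\tau)$, which by Theorem \ref{thm:regularite_trajectoire} lies in $\Hol(\eta)$ with $\esp{\|\cdot\|_{\Hol(\eta)}^p}\le c\,\|\sigma(Y_{\cdot,s}(x))\|_{\LL_{p,1}}^p$. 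Hence
\begin{equation*}
\esp{|Y_{r,s}(x)-Y_{r^\prime,s}(x)|^p}\le c\,|r-r^\prime|^{p\eta}\,\|\sigma(Y_{\cdot,s}(x))\|_{\LL_{p,1}}^p,
\end{equation*}
and the sublinearity and Lipschitz continuity of $\sigma$, together with the a priori $\LL_{p,1}$-bounds on the flow that underlie Theorem \ref{thm:solution_C}, control $\|\sigma(Y_{\cdot,s}(x))\|_{\LL_{p,1}}^p$ by $c(1+|x|^p)$.

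For the second bracket (variation of the terminal time, say $s<s^\prime$) the direct subtraction of the two equations is awkward, since Equation \eqref{eq:B1} is backward in its upper index and no straightforward Gronwall argument is available; the device I would use is the cocycle identity \eqref{eq:14}, which gives $Y_{r^\prime,s^\prime}(x)=Y_{r^\prime,s}\bigl(Y_{s,s^\prime}(x)\bigr)$ and hence
\begin{equation*}
Y_{r^\prime,s}(x)-Y_{r^\prime,s^\prime}(x)=Y_{r^\prime,s}(x)-Y_{r^\prime,s}\bigl(Y_{s,s^\prime}(x)\bigr).
\end{equation*}
Because $Y_{r^\prime,s}(\cdot)$ depends only on the increments of $W^V$ over $[r^\prime,s]$ while $Y_{s,s^\prime}(x)$ depends only on those over $[s,s^\prime]$, the two are independent; freezing the outer initial point through the substitution formula of Corollary \ref{cor:substitution} and invoking the spatial $L^p$-Lipschitz estimate $\esp{|Y_{r^\prime,s}(x)-Y_{r^\prime,s}(y)|^p}\le c\,|x-y|^p$ of the flow (obtained by the same contraction argument in the adapted $Z$-formulation that yields Theorem \ref{thm:solution_C}) gives $\esp{|Y_{r^\prime,s}(x)-Y_{r^\prime,s^\prime}(x)|^p}\le c\,\esp{|x-Y_{s,s^\prime}(x)|^p}$. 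Finally $x-Y_{s,s^\prime}(x)=\int_s^{s^\prime}\sigma(Y_{\tau,s^\prime}(x))\circ\d W^V(\tau)$ is again a near-diagonal Stratonovitch increment, bounded exactly as in the first bracket by $c(1+|x|^p)\,|s-s^\prime|^{p\eta}$.

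The main obstacle is precisely this second estimate: the backward structure rules out a naive Gronwall argument, so the whole weight rests on the cocycle identity \eqref{eq:14} (which converts a terminal-time increment into a spatial increment of the flow evaluated at a near-diagonal point) combined with the spatial $L^p$-Lipschitz regularity of the flow and the independence of the two factors. Once both brackets are controlled, summing them yields the stated inequality, and the Kolmogorov criterion then furnishes the continuous version $Y$.
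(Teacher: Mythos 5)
Your proposal is correct and its backbone --- splitting the increment along the two parameters and, for the terminal-time variation, converting it into a spatial increment via the cocycle identity \eqref{eq:14} together with the substitution formula of Corollary \ref{cor:substitution} and the independence of $Y_{r^\prime,s}(\cdot)$ from $Y_{s,s^\prime}(x)$ --- is exactly the paper's argument; that is indeed the one non-routine step. Where you diverge is in the elementary estimates you feed into this skeleton. For the initial-time increment $Y_{r,s}(x)-Y_{r^\prime,s}(x)=-\int_r^{r^\prime}\sigma(Y_{\tau,s}(x))\circ\d W^V(\tau)$ and for the near-diagonal quantity $x-Y_{s,s^\prime}(x)$ you stay on the Volterra side and invoke the pathwise H\"older regularity of the Stratonovitch integral (Theorem \ref{thm:regularite_trajectoire}); this forces you to control $\|\sigma(Y_{\cdot,s}(x))\|_{\LL_{p,1}}$, hence also the Malliavin derivative of the flow, which is available but is the heaviest a priori bound of Section \ref{sec:equation-refeq:c}. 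The paper instead transfers every term to the time-reversed adapted process $Z$ through $Y_{r,s}(x)=Z_{s-r,s}(\Theta_s(\omega),x)$ and applies the It\^o-side estimate of Theorem \ref{thm:continuite_en_temps_Z} (BDG plus Gronwall, no Malliavin derivative needed), writing in particular
\begin{equation*}
\esp{|x-Y_{s,s^\prime}(x)|^p}=\esp{|Z_{0,s^\prime}(x)-Z_{s^\prime-s,s^\prime}(x)|^p}\le c\,(1+|x|^p)\,|s^\prime-s|^{p\eta}.
\end{equation*}
Your route buys a proof that never leaves the $Y$-picture; the paper's buys lighter prerequisites and a uniform treatment of all three elementary estimates by the single two-parameter bound on $Z$. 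Both are valid given the results already established. Your closing remark on the Kolmogorov continuity criterion (enlarging $p$ if necessary so that $p\eta>2$ on the two-dimensional index set) is a useful explicit justification of the continuous modification, which the paper leaves implicit.
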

\begin{proof}
  Without loss of generality, assume that $s\le s^\prime$ and remark that $Y_{s,\,
    s^\prime(x)}$ thus belongs to $\sigma\{\check{B}^T_u, \, u\ge
  s\}$.
  \begin{multline*}
    \esp{|Y_{r,s}(x)-Y_{r^\prime,s^\prime}(x)|^p}\\
    \begin{aligned}
      &\le c\left( \esp{|Y_{r,s}(x)-Y_{r^\prime,s}(x)|^p}+\esp{|Y_{r^\prime,s}(x)-Y_{r^\prime,s^\prime}(x)|^p}\right)\\
      &=c\left( \esp{|Y_{r,s}(x)-Y_{r^\prime,s}(x)|^p}+\esp{|Y_{r^\prime,s}(x)-Y_{r^\prime,s}(Y_{s,s^\prime}(x))|^p}\right)\\
      &=c\left(
        \esp{|Z_{s-r,s}(x)-Z_{s-r^\prime,s}(x)|^p}+\esp{|Z_{s-r^\prime,s}(x)-Z_{s-r^\prime,s}(Y_{s,s^\prime}(x))|^p}\right).
    \end{aligned}
  \end{multline*}
  According to Theorem \ref{thm:continuite_en_temps_Z},
  \begin{equation}\label{eq:15}
    \esp{|Z_{s-r,s}(x)-Z_{s-r^\prime,s}(x)|^p}\le c|r-r^\prime|^{p\eta}(1+|x|^p).
  \end{equation}
  In view of Theorem \ref{thm:int_strato_inverse}, the stochastic
  integral which appears in Equation (\ref{eq:C}) is also a
  Stratonovitch integral hence we can apply the substitution formula
  and say
  \begin{equation*}
    Z_{s-r^\prime,s}(Y_{s,s^\prime}(x))=\left. Z_{s-r^\prime,s}(y)\right|_{y=Y_{s,s^\prime}(x)}.
  \end{equation*}
  Thus we can apply Theorem \ref{thm:continuite_en_temps_Z} and we
  obtain
  \begin{equation*}
    \esp{|Z_{s-r^\prime,s}(x)-Z_{s-r^\prime,s}(Y_{s,s^\prime}(x))|^p}\le  c\esp{|x-Y_{s,s^\prime}(x)|^p}.
  \end{equation*}
  The right hand side of this equation is in turn equal to
  \begin{math}
    \esp{|Z_{0,s^\prime}-Z_{s^\prime-s,s^\prime}(x)|^p},
  \end{math}
  thus, we get
  \begin{equation}\label{eq:16}
    \esp{|Z_{s-r^\prime,s}(x)-Z_{s-r^\prime,s}(Y_{s,s^\prime}(x))|^p}\le  c  (1+|x|^p) |s^\prime-s|^{p\eta}
  \end{equation}
  Combining (\ref{eq:15}) and (\ref{eq:16}) gives
  \begin{equation*}
    \esp{|Y_{r,s}(x)-Y_{r^\prime,s^\prime}(x)|^p}\le c (1+|x|^p) ( |s^\prime-s|^{p\eta}+|r-r^\prime|^{p\eta}),
  \end{equation*}
  hence the result.
\end{proof}

Thus, we have the main result of this paper.
\begin{theorem}
  \label{thm:solution_A}
  Assume that $\VV$ is an $E^0$ causal map continuous from $\L^p$ into
  $\I_{\alpha,p}$ for $\alpha >0$ and $p\ge 4$ such that $\alpha p>1.$
  Then Equation (\ref{eq:Ap}) has one and only one solution.
\end{theorem}
\begin{proof}
  Under the hypothesis, we know that Equation (\ref{eq:B1}) has a
  unique solution which satisfies (\ref{eq:14}). By definition of a
  solution of (\ref{eq:B1}), the process $Y^{-1}:(\omega,\, s)\mapsto
  Y_{st}^{-1}(\omega,\, x)$ belongs to $\LL_{p,1}$ hence we can apply
  the substitution formula. Following the lines of proof of the
  previous theorem, we see that $Y^{-1}$ is a solution of
  (\ref{eq:Ap}).

  In the reverse direction, two distinct solutions of (\ref{eq:Ap})
  would give raise to two solutions of (\ref{eq:B1}) by the same
  principles. Since this is definitely impossible in view of Theorem
  \ref{thm:solution_B}, Equation (\ref{eq:Ap}) has at most one
  solution.
\end{proof}

\section{Technical proofs}
\label{sec:proofs}

\subsection{Substitution formula}
\label{sec:substitution-formula}
The proof of \ref{thm:int_strato_inverse} relies on several lemmas
including one known in anticipative calculus as the substitution
formula, cf. \cite{nualart.book}.
\begin{theorem}
  \label{thm:existence_trace}
  Assume that Hypothesis \ref{hypA}($p,\, \eta$) holds. Let $u$ belong
  to $\LL_{p,1}.$ If $V\n^\WW u$ is of trace class, then
  \begin{equation*}
    \int_0^T D^\WW u(s)\d s=\trace(V\n^\WW u).
  \end{equation*}
  Moreover,
  \begin{equation*}
    \esp{\left|\trace(V\n^\WW u)\right|^p}\le \, c\, \|u\|_{\LL_{p,1}}^p.
  \end{equation*}
\end{theorem}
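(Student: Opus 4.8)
The plan is to read $\trace(V\n^\WW u)$ as the integral along the diagonal of the kernel of $V\n^\WW u$, to recognise $\int_0^T D^\WW u(r)\,\d r$ as that same diagonal integral, and to bridge the two through a discretisation of the trace by indicator functions, using the $L^p$-convergence already obtained from \eqref{eq:5}.

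First I would record the discretised trace. Fix a partition $\pi=\{0=t_0<\dots<t_m=T\}$, write $\theta_i=t_{i+1}-t_i$, and let $P_\pi$ be the orthogonal projection of $\L^2([0,\, T];\, \R^n)$ onto the finite-dimensional span of the indicators $\car_{[t_i,\, t_{i+1})}$ along the coordinate directions; explicitly $P_\pi$ averages a function over each interval $[t_i,\, t_{i+1})$. Taking the orthonormal family $h_i=\theta_i^{-1/2}\car_{[t_i,\, t_{i+1})}$ (together with the canonical basis of $\R^n$ in the vector-valued case), a direct computation from the definition of $\n^\WW u$ as the kernel $\n^\WW_r u(s)$ gives $(V\n^\WW u\, h_i,\, h_i)_{\L^2}=\theta_i^{-1}\iint_{[t_i,\, t_{i+1})^2}V(\n^\WW_r u)(s)\,\d s\,\d r$, so that $\trace(P_\pi\, V\n^\WW u\, P_\pi)=\sum_i (V\n^\WW u\, h_i,\, h_i)_{\L^2}$ is precisely the Riemann-type sum appearing in \eqref{eq:5} evaluated at $t=T$. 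Consequently this sum converges in $L^p(\P)$ to $\int_0^T D^\WW u(r)\,\d r$ as $|\pi|\to 0$; here the Hölder regularity furnished by Hypothesis \ref{hypA} is exactly what makes the diagonal value $V(\n^\WW_r u)(r)=D^\WW u(r)$ meaningful.

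Next I would let $|\pi|\to 0$ on the operator side. Because $V\n^\WW u$ is assumed \emph{trace class}, for a.e.\ $\omega$ it admits a singular value decomposition $V\n^\WW u=\sum_k\lambda_k\,(\,\cdot\,,e_k)_{\L^2}\,f_k$ with $\sum_k|\lambda_k|<\infty$, whence $\trace(P_\pi\, V\n^\WW u\, P_\pi)=\sum_k\lambda_k\,(P_\pi f_k,\, P_\pi e_k)_{\L^2}$. As $|\pi|\to 0$ the averaging projections converge strongly to $\Id$, so each summand tends to $\lambda_k\,(f_k,\, e_k)_{\L^2}$ while staying bounded by $|\lambda_k|$; dominated convergence in $k$ then yields $\trace(P_\pi\, V\n^\WW u\, P_\pi)\to\trace(V\n^\WW u)$ for a.e.\ $\omega$. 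Fixing a sequence of partitions with mesh tending to zero and passing to a subsequence along which the $L^p$-limit of the previous paragraph also holds almost surely, the two limits must coincide, giving $\int_0^T D^\WW u(r)\,\d r=\trace(V\n^\WW u)$ almost surely.

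The moment bound is then immediate. By Jensen's (Hölder's) inequality, $\esp{|\int_0^T D^\WW u(r)\,\d r|^p}\le T^{p-1}\,\esp{\int_0^T|D^\WW u(r)|^p\,\d r}$, and the right-hand side is at most $c\,\|u\|_{\LL_{p,1}}^p$ by the estimate \eqref{eq:121} already established. I expect the only genuinely delicate point to be the passage $\trace(P_\pi A P_\pi)\to\trace(A)$: this is exactly where the trace-class hypothesis (rather than mere boundedness or the Hilbert--Schmidt property) is needed, since it supplies the absolutely summable singular values driving the dominated-convergence step, and it is also what lets one reconcile the $\omega$-wise analytic limit with the probabilistic $L^p$-limit coming from \eqref{eq:5}.
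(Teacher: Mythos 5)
Your proof is correct and follows essentially the same route as the paper: the paper also identifies $\trace(P_k\, V\n^\WW u\, P_k)$ (with dyadic step functions $\phi_{k,m}=2^{k/2}\car_{[(m-1)2^{-k},\, m2^{-k})}$) with the double-integral Riemann sums of \eqref{eq:5}, invokes the trace-class hypothesis to get $\trace(P_k\, V\n^\WW u\, P_k)\to\trace(V\n^\WW u)$ (citing \cite{MR2154153} where you spell out the singular-value argument), and derives the moment bound from \eqref{eq:121}. The only differences are cosmetic: general partitions versus dyadic ones, and an explicit proof of the finite-rank approximation of the trace in place of a reference.
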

\begin{proof}
  For each $k$, let $(\phi_{k,\, m},\, m=1,\cdots,\, 2^k)$ be the
  functions
  \begin{math}
    \phi_{k,\, m}=2^{k/2}\car_{[(m-1)2^{-k},\, m2^{-k})}.
  \end{math}
  Let $P_k$ be the projection onto the span of the $\phi_{k,\, m}$,
  since $\n^\WW Vu$ is of trace class, we have (see \cite{MR2154153})
$$\trace(V\n^\WW  p_t u)=\lim_{k\to +\infty} \trace(P_k \, V\n^\WW  p_t u \, P_k).$$ Now,
\begin{align*}
  \trace(P_k \, V\n^\WW u \, P_k)&=\sum_{m=1}^k (V\n^\WW p_t u,\
  \phi_{k,m}\otimes \phi_{k,m})_{\L^2\otimes \L^2}\\
  &=\sum_{m=1}^k2^k \int_{(m-1)2^{-k}\wedge t}^{ m2^{-k}\wedge t }
  \int_{(m-1)2^{-k}\wedge t }^{ m2^{-k}\wedge t} V(\n^\WW _ru)(s) \d
  s\d r.
\end{align*}
According to the proof of Theorem \ref{thm:strato_integrable}, the
first part of the theorem follows. The second part is then a rewriting
of (\ref{eq:121}).
\end{proof}
% There is another result from \cite{decreusefond03.1} which is worth
% quoting for the sequel.
% \begin{theorem}
%   \label{thm:regularite_trajectoire}
%   Assume that Hypothesis \ref{hypA}($p,\, \eta$) holds.
% \end{theorem}
For 
$p\ge 1$, let $\Gamma_p$ be the set of random fields:
\begin{align*}
  u\, :\, \R^m &\longrightarrow \LL_{p,1}\\
  x&\longmapsto ((\omega,\, s)\mapsto u(\omega, \, s,\, x))
\end{align*}
equipped with the semi-norms,
\begin{equation*}
  p_K(u)=\sup_{x\in K} \| u(x)\|_{\LL_{p,1}}
\end{equation*}
for any compact $K$ of $\R^m$.
\begin{corollary}[Substitution formula]
  \label{cor:substitution}
  Assume that Hypothesis \ref{hypA}($p,\, \eta$) holds.  Let
  $\{u(x),\,, x\in \R^m\}$ belong to $\Gamma_p$. Let $F$ be a random
  variable such that $((\omega, \, s)\mapsto u(\omega, \, s,\, F))$
  belongs to $\LL_{p,1}$. Then,
  \begin{equation}\label{eq:12}
    \int_0^T u(s,F)\circ \d W^V(s) =\left.\int_0^T u(s,x)\circ \d W^V_s\right|_{x=F}.
  \end{equation}
\end{corollary}
\begin{proof}
  Simple random fields of the form
  \begin{equation*}
    u(\omega,\, s,\, x)=\sum_{l=1}^K H_l(x)u_l(\omega,\, s)
  \end{equation*}
  with $H_l$ smooth and $u_l$ in $\LL_{p,1}$ are dense in
  $\Gamma_p$. In view of (\ref{eq:3}), it is sufficient to prove the
  result for such random fields. By linearity, we can reduce the proof
  to random fields of the form $H(x)u(\omega,\, s).$ Now for any
  partition $\pi$,
  \begin{multline*}
    \div^\WW\Bigl(\sum_{t_i\in\pi}\frac{1}{\theta_i}
    \int_{t_i\wedge t}^{t_{i+1}\wedge t} \kern-3pt H( F)V(u(\omega, .))(r) \d r\, \car_{[t_i,\, t_{i+1})}\Bigr)\\
    =H( F) \div^\WW\Bigl(\sum_{t_i\in\pi}\frac{1}{\theta_i}
    \int_{t_i\wedge t}^{t_{i+1}\wedge t} \kern-3pt V(u(\omega, .))(r) \d r\, \car_{[t_i,\, t_{i+1})}\Bigr)\\
    -\sum_{t_i\in\pi}\int_{t_i\wedge t}^{t_{i+1}\wedge
      t}\int_{t_i\wedge t}^{t_{i+1}\wedge t} H^\prime(F)\n^\WW_s F \,
    Vu(r)\d s\d r.
  \end{multline*}
  On the other hand,
  \begin{align*}
    \n^\WW_s (H(F)u(\omega,\, r))&=H^\prime(F)\n^\WW_s F \, u(r),
  \end{align*}
  hence
  \begin{multline*}
    \sum_{t_i\in\pi}\frac{1}{\theta_i} \iint\limits_{[t_i\wedge
      t,t_{i+1}\wedge t]^2} V(\n^\WW _rH(F)u)(s) \d s\d r\\
    =\sum_{t_i\in\pi}\frac{1}{\theta_i} \iint\limits_{[t_i\wedge
      t,t_{i+1}\wedge t]^2} H^\prime(F)\n^\WW_s F \, Vu(r) \d s\d r.
  \end{multline*}
  According to Theorem \ref{thm:strato_integrable}, Eqn. (\ref{eq:12})
  is satisfied for simple random fields.
\end{proof}
\begin{defn}
  \label{def:integrale}
  For any $0\le r\le t\le T$, for $u$ in $\LL_{p,\, 1}$, we define
  \begin{math}
    \int_r^t u(s)\circ \d W^V(s)
  \end{math}
  as
  \begin{align*}
    \int_r^t u(s)\circ \d W^V(s)&=\int_0^t u(s)\circ \d
    W^V(s)-\int_0^r u(s)\circ \d W^V(s)\\
    &= \int_0^T e_tu(s)\d W^V(s)-\int_0^T e_ru(s)\circ \d W^V(s)\\
    &=\div^\WW(V(e_t-e_r)u)+\int_r^t D^\WW u(s)\d s. \end{align*}
\end{defn}
By the very definition of trace class operators, the next lemma is straightforward.
\begin{lemma}
  \label{lem:taut_trace}
  Let $A$ and $B$ be two continuous maps from $\L^2([0,\, T];\, \R^n)$
  into itself. Then, the map $\taut A\otimes B$ (resp. $A\taut \otimes
  B$) is of trace class if and only if the map $A\otimes \taut B$
  (resp. $A\otimes B \taut$) is of trace class. Moreover, in such a
  situation,
  \begin{equation*}
    \trace(\taut A\otimes B)=\trace( A\otimes \taut B) \text{, resp. } \trace( A\taut \otimes B)=\trace( A\otimes  B\taut).
  \end{equation*}
\end{lemma}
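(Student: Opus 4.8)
The whole statement rests on two elementary properties of the reversal operator $\taut$. First, $\taut$ is an involution, $\taut\circ\taut=\Id$, since $T-(T-s)=s$. Second, and crucially, $\taut$ is self-adjoint in $\L^2([0,\,T];\,\R^n)$: the change of variable $s\mapsto T-s$ gives $(\taut f,g)_{\L^2}=\int_0^T f(T-s)g(s)\,\d s=\int_0^T f(s)g(T-s)\,\d s=(f,\taut g)_{\L^2}$, so that $\taut^*_T=\taut$. In particular $\taut$ is bounded with bounded inverse.

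I read $\trace(A\otimes B)$ as the contraction of the two slots of the tensor, that is the Hilbert--Schmidt pairing $\trace(A\,B^*_T)=\int_0^T\int_0^T A(r,s)B(r,s)\,\d r\,\d s$ when kernels are available; this is exactly the object written $\trace(V\n^\WW u)$ in Theorem \ref{thm:existence_trace}, and ``being of trace class'' is the hypothesis that the corresponding operator lies in the trace-class ideal. The two equivalences then follow from the ideal property: in the first case the operators to be compared are $\taut(A B^*_T)$ and $(A B^*_T)\taut$, which are trace class exactly when $A B^*_T$ is, because $\taut$ is a bounded bijection; in the ``resp.'' case the two operators literally coincide, namely $A\taut B^*_T$, since $(B\taut)^*_T=\taut^*_T B^*_T=\taut B^*_T$.

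For the trace identities the plan is purely algebraic, in the spirit of the remark preceding Lemma \ref{lem:causalite}. Using $\taut^*_T=\taut$, the first identity reads $\trace(\taut A\otimes B)=\trace\bigl((\taut A)B^*_T\bigr)=\trace(\taut\,A B^*_T)$ on one side and $\trace(A\otimes\taut B)=\trace\bigl(A(\taut B)^*_T\bigr)=\trace(A B^*_T\,\taut)$ on the other, and these coincide by cyclicity of the trace, legitimate because $A B^*_T$ is trace class and $\taut$ is bounded. The ``resp.'' identity needs even less: $\trace(A\taut\otimes B)=\trace(A\taut B^*_T)$ and $\trace(A\otimes B\taut)=\trace\bigl(A(B\taut)^*_T\bigr)=\trace(A\taut B^*_T)$ are the trace of one and the same operator. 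I would also record the one-line kernel computation as a sanity check: the left factor $\taut$ turns $A(r,s)$ into $A(T-r,s)$, and the single substitution $r\mapsto T-r$ carries the reflection onto the first argument of $B$, yielding $\trace(A\otimes\taut B)$; the other case is the same substitution $s\mapsto T-s$ in the second argument. I keep the operator version as the main argument because, as Example \ref{ex:gfbm} shows, the maps to which the lemma is applied need not be integral operators.

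The only point demanding care --- the main obstacle --- is not the algebra but the bookkeeping of trace-class membership: one must check that every appeal to cyclicity and every transfer of the trace-class property across $\taut$ takes place inside the trace-class ideal and not merely formally. This is precisely what the boundedness and bounded invertibility of $\taut$ guarantee, so once the self-adjointness $\taut^*_T=\taut$ is in hand the proof is short.
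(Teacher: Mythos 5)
Your proof is correct and rests on the same two elementary facts as the paper's --- that $\taut$ is a self-adjoint involution, hence unitary, on $\L^2([0,\,T];\,\R^n)$ --- but the mechanism is slightly different. The paper never passes to the composition $AB^*_T$: it writes out the defining sum over a CONB, observes that
\begin{equation*}
(\taut A h_n,\, B h_n)_{\L^2}=(A h_n,\, \taut B h_n)_{\L^2}
\end{equation*}
term by term, and is done; since the two sums (with or without absolute values) are literally identical, the equivalence of trace-class membership and the equality of the traces drop out simultaneously, with no appeal to cyclicity of the trace, and the ``resp.''\ case is then obtained by adjunction. Your route --- identifying $\trace(A\otimes B)$ with $\trace(AB^*_T)$, transferring trace-class membership across the unitary $\taut$ via the ideal property, and invoking $\trace(ST)=\trace(TS)$ --- is also valid: the cyclicity step is legitimate because $AB^*_T$ is trace class and $\taut$ is bounded, and in the ``resp.''\ case you correctly note the two operators coincide outright. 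What your version buys is the explicit kernel picture $\iint A(r,s)B(r,s)\d r\d s$ and a careful accounting of where trace-class membership is used; what the paper's version buys is economy, since the termwise identity makes both assertions of the lemma immediate at once. The only caveat is that your argument presupposes the identification $\trace(A\otimes B)=\trace(AB^*_T)$, which the paper leaves implicit through the pairing $(A\otimes B,\, h_n\otimes h_n)_{\L^2}=(Ah_n,\, Bh_n)_{\L^2}$; your reading is consistent with that usage, so no gap results.
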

% \begin{proof}
%   The map $\taut A\otimes B$ is of trace class if and only if for
%   $(h_n,\, n\ge 1)$ a CONB of $\L^2$,
%   \begin{equation*}
%     \sum_n |(\taut A\otimes B,\, h_n\otimes h_n)_{\L^2}|<\infty.
%   \end{equation*}
%   But,
%   \begin{align*}
%     \sum_n |(\taut A\otimes B,\, h_n\otimes h_n)_{\L^2}| &=\sum_n |(\taut A h_n,\, B h_n)_{\L^2}|\\
%     &=\sum_n |(A h_n,\, \taut B h_n)_{\L^2}|\\
%     &= \sum_n |( A\otimes\taut B,\, h_n\otimes h_n)_{\L^2}|,
%   \end{align*}
%   since $\taut$ is self-adjoint in $\L^2$. The first result
%   follows. The second part follows by adjunction.
% \end{proof}
The next corollary follows by a classical density argument.
\begin{corollary}
  \label{lem:taut}
  Let $u\in \LL_{2,1}$ such that $\n^\WW\otimes \taut V u$ and
  $\n^\WW\otimes V \taut u$ are of trace class. Then, $\taut
  \n^\WW\otimes V u$ and $\n^\WW \taut \otimes V u $ are of trace
  class. Moreover, we have:
  \begin{multline*}
    \trace(\n^\WW\otimes \taut V u)=\trace(\taut \n^\WW\otimes  V u)\\
    \text{ and } \trace(\n^\WW\otimes (V \taut) u)=\trace( \n^\WW
    \taut \otimes V u).
  \end{multline*}
\end{corollary}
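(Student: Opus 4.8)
The plan is to read this off directly from Lemma \ref{lem:taut_trace} under the dictionary $A=\n^\WW$, $B=V$, where $\n^\WW u$ is regarded as the Hilbert--Schmidt object on which the two tensor slots act: the first slot is the Malliavin-derivative variable $r$ of $\n^\WW_r u$, and the second is the variable on which $V$ operates. With this reading, each of the four expressions in the statement is one of the four expressions of Lemma \ref{lem:taut_trace}. Namely $\n^\WW\otimes \taut V u$ and $\taut \n^\WW\otimes V u$ are $A\otimes \taut B$ and $\taut A\otimes B$, while $\n^\WW\otimes V\taut u$ and $\n^\WW\taut\otimes V u$ are $A\otimes B\taut$ and $A\taut\otimes B$. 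Once this identification is in place, there is essentially nothing left to prove.

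Concretely, for the first assertion I would invoke the first (non-``resp.'') part of Lemma \ref{lem:taut_trace} with $A=\n^\WW$, $B=V$: the map $A\otimes \taut B=\n^\WW\otimes \taut V u$ is of trace class if and only if $\taut A\otimes B=\taut\n^\WW\otimes V u$ is, and then their traces agree. Since $\n^\WW\otimes \taut V u$ is assumed of trace class, this yields at once that $\taut\n^\WW\otimes V u$ is of trace class and that $\trace(\n^\WW\otimes \taut V u)=\trace(\taut\n^\WW\otimes V u)$. The second assertion is the mirror image, using the ``resp.'' part of Lemma \ref{lem:taut_trace}: from $\n^\WW\otimes V\taut u=A\otimes B\taut$ being of trace class one gets $\n^\WW\taut\otimes V u=A\taut\otimes B$ of trace class with equal trace.

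The one point needing care --- and the only genuine obstacle --- is that $\n^\WW$ is not a bounded endomorphism of $\L^2([0,T];\R^n)$, so Lemma \ref{lem:taut_trace} cannot be quoted verbatim with $A=\n^\WW$. I would resolve this by arguing pathwise: for $\P$-almost every $\omega$, $\n^\WW u(\omega)$ is a fixed element of $\L^2\otimes\L^2$, i.e.\ a fixed Hilbert--Schmidt kernel $(r,s)\mapsto \n^\WW_r u(\omega)(s)$, and the four expressions are obtained from it by letting $\taut$ and $V$ act on the two variables. Inspecting the proof of Lemma \ref{lem:taut_trace}, one sees it uses only that $\taut$ is a self-adjoint involution on $\L^2$ together with the CONB computation of the trace, and both survive intact when one tensor factor is a fixed kernel rather than a genuine operator. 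In fact the required trace identity reduces to the change of variables $r\mapsto T-r$ in $\int_0^T V(\n^\WW_r u)(T-r)\,\d r$, which is precisely the manifestation of the self-adjointness of $\taut$ exploited in Lemma \ref{lem:taut_trace}. No estimate beyond this is involved, so once the dictionary and the pathwise reading are set up, the corollary is immediate.
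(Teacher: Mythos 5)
Your argument is correct in substance, but it is not the route the paper takes, and the difference is worth spelling out. The paper does not try to apply Lemma \ref{lem:taut_trace} to $\n^\WW$ directly (for exactly the reason you flag: $\n^\WW$ is not a bounded endomorphism of $\L^2$). Instead it first proves the identities for \emph{simple} random fields $u(\omega,s)=\sum_j U_j(\omega)g_j(s)$, for which $\n^\WW u=\sum_j \n^\WW U_j\otimes g_j$ is a finite-rank kernel and the Lemma (in its rank-one incarnation) applies verbatim; it then extends to general $u\in\LL_{2,1}$ by density of simple fields in $\Gamma_p$ together with the continuity of the trace functional furnished by the estimate $\esp{|\trace(V\n^\WW u)|^p}\le c\|u\|_{\LL_{p,1}}^p$ of Theorem \ref{thm:existence_trace}. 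Your pathwise argument — freeze $\omega$, regard $\n^\WW u(\omega)$ as a fixed Hilbert--Schmidt kernel, and rerun the CONB computation — is more direct and dispenses with the approximation step, which is a genuine simplification given that the trace-class hypothesis is already assumed in the statement. One caveat: your assertion that the Lemma's proof ``survives intact'' slightly understates what changes. For operator tensors the self-adjointness of $\taut$ gives a term-by-term identity $(\taut Ah_n, Bh_n)=(Ah_n,\taut Bh_n)$, whereas for a fixed kernel the same manipulation moves $\taut$ onto the test functions, turning $h_n\otimes h_n$ into $\taut h_n\otimes h_n$ or $h_n\otimes\taut h_n$; to compare the two absolute diagonal sums one must then replace the CONB $(h_n)$ by $(\taut h_n)$ and invoke the basis-independence of the paper's notion of trace class and of the trace. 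That independence is asserted right after the definition, so the step is available, but it should be said explicitly — it is the precise operator-level counterpart of the change of variables $r\mapsto T-r$ that you correctly identify as the heart of the matter.
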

% \begin{proof}
%   For $u$ simple, i.e., of the form
%   \begin{equation*}
%     u(\omega,\, s)=\sum_{j=1}^n U_i(\omega)g_i(s),
%   \end{equation*}
%   the result follows from Lemma \ref{lem:taut_trace}.  Such random
%   fields are dense in $\Gamma_p$ and according to Theorem
%   \ref{thm:existence_trace}, the $\trace$ function is continuous on
%   $\Gamma_p$ hence the result is satisfied in full generality.
% \end{proof}

\begin{proof}[Proof of~\ref{thm:int_strato_inverse}]
  We first study the divergence term. In view of \ref{thm:taut_delta},
  we have
  \begin{align*}
    \div^B (V(e_{T-r}-e_{T-t}) \taut \cu \circ \Theta_T)&=\div^B (V\taut (e_t-e_r) \cu \circ \Theta_T)\\
    &= \div^B (\taut \VV (e_t-e_r) \cu \circ \Theta_T)\\
    &=\cdelta(\VV (e_t-e_r) \cu)(\comega)\\
    &=\int_r^t \VV (\car_{[r,\, t]}\cu)(s)\d B^T(s).
  \end{align*}
  According to Theorem \ref{lem:causalite}, $(\VV)^*$ is $\check{E}_0$
  causal and according to \ref{lem:strict-causality}, it is strictly
  $\check{E}_0$ causal. Thus, Theorem \ref{thm:trace_zero} implies
  that $\cn V (e_t-e_r) \cu$ is of trace class and
  quasi-nilpotent. Hence Lemma \ref{lem:taut} induces that
  \begin{equation*}
    \taut \VV \taut \otimes \taut \cn\taut (e_t-e_r) \cu
  \end{equation*}
  is trace-class and quasi-nilpotent. Now, according to Theorem
  \ref{thm:cdelta}, we have
  \begin{equation*}
    \taut \VV \taut \otimes \taut \cn\taut (e_t-e_r) \cu=V(\n \taut (e_{T-r}-e_{T-t})\cu \circ \Theta_T).
  \end{equation*}
  According to Theorem \ref{thm:strato_integrable}, we have proved
  (\ref{eq:9}).
\end{proof}
\subsection{The forward equation}
\label{sec:equation-refeq:c}
\begin{lemma}
  \label{lem:borne_VVsigma}
  Assume that Hypothesis \ref{hypA}$(p,\, \eta)$ holds and that
  $\sigma$ is Lipschitz continuous. Then, for any $0\le a\le b\le T$,
  the map
  \begin{align*}
    \VV\circ \sigma \, :\, C([0,T], \, \R^n)&\longrightarrow C([0,T], \, \R^n)\\
    \phi &\longmapsto \VV(\sigma\circ \psi\ \car_{[a,b]})
  \end{align*}
  is Lipschitz continuous and Gâteaux differentiable. Its differential
  is given by:
  \begin{equation}\label{eq:18}
    d\VV\circ\sigma(\phi)[\psi]=\VV(\sigma^\prime\circ \phi \ \psi).
  \end{equation}
  Assume furthermore that $\sigma$ is sub-linear, i.e.,
  \begin{equation}\label{eq:22}
    |\sigma(x)|\le c ( 1+|x|), \text{ for any }x\in \R^n.
  \end{equation}
  Then, for any $\psi\in C([0,T],\, \R^n)$, for any $t\in [0,T]$,
  \begin{align*}
    |\VV(\sigma\circ \psi)(t)| &\le c T^{\eta+1/p} (1+\int_0^t |\psi(s)|^p \d s)\\
    &\le c T^{\eta+1/p} (1+\| \psi\|_\infty).
  \end{align*}
\end{lemma}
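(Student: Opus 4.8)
The plan is to treat Hypothesis~\ref{hypA}$(p,\eta)$ as the single analytic input: $V$, and hence $\VV=\taut V\taut$, is a \emph{bounded linear} operator from $\L^p([0,T];\R^n)$ into $\Hol(\eta)$. Indeed $\taut$ is an isometry of $\L^p$, and after the change of variable $s\mapsto T-s$ it preserves both the supremum norm and the Hölder seminorm, so it is also an isometry of $\Hol(\eta)$; therefore $\|\VV\|_{\L^p\to\Hol(\eta)}=\|V\|_{\L^p\to\Hol(\eta)}=:c_V<\infty$. Throughout I would also use the elementary embedding $\Hol(\eta)\hookrightarrow C([0,T];\R^n)$, i.e. $\|f\|_\infty\le\|f\|_{\Hol(\eta)}$.

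For the Lipschitz property, given $\phi_1,\phi_2$ the linearity of $\VV$ gives
\begin{equation*}
\|\VV(\sigma\circ\phi_1\,\car_{[a,b]})-\VV(\sigma\circ\phi_2\,\car_{[a,b]})\|_{\Hol(\eta)}\le c_V\,\|(\sigma\circ\phi_1-\sigma\circ\phi_2)\car_{[a,b]}\|_{\L^p},
\end{equation*}
and the Lipschitz bound $|\sigma(\phi_1(s))-\sigma(\phi_2(s))|\le L\|\phi_1-\phi_2\|_\infty$ turns the right-hand side into $c_V L\,(b-a)^{1/p}\|\phi_1-\phi_2\|_\infty\le c_V L\,T^{1/p}\|\phi_1-\phi_2\|_\infty$; composing with $\|\cdot\|_\infty\le\|\cdot\|_{\Hol(\eta)}$ yields Lipschitz continuity for the uniform norm.

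For Gâteaux differentiability I would form the difference quotient and pull $\VV$ out:
\begin{equation*}
\tfrac1\ee\Bigl(\VV(\sigma\circ(\phi+\ee\psi)\car_{[a,b]})-\VV(\sigma\circ\phi\,\car_{[a,b]})\Bigr)=\VV\!\left(\tfrac{\sigma\circ(\phi+\ee\psi)-\sigma\circ\phi}{\ee}\,\car_{[a,b]}\right).
\end{equation*}
Pointwise in $s$ the argument converges to $\sigma^\prime(\phi(s))\psi(s)$, and since the Lipschitz hypothesis forces $\|\sigma^\prime\|_\infty\le L$ it is dominated by $L|\psi|\,\car_{[a,b]}\in\L^p$; dominated convergence then gives convergence of the argument in $\L^p$, and the continuity of $\VV\colon\L^p\to\Hol(\eta)$ transfers this to convergence in $\Hol(\eta)$, hence uniformly, to $\VV(\sigma^\prime\circ\phi\,\psi)$, which is \eqref{eq:18}. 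I expect this interchange of $\lim_{\ee\to0}$ with $\VV$ — legitimized precisely by the $\L^p$-to-$\Hol(\eta)$ continuity together with one dominated-convergence estimate — to be the only genuine content of the differentiability claim; it tacitly reads ``$\sigma$ Lipschitz'' as ``$\sigma\in C^1$ with $\|\sigma^\prime\|_\infty\le L$'', which is the sense in which $\sigma^\prime$ can appear at all.

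Finally, for the sub-linear estimate I would bound the pointwise value by the norm, $|\VV(\sigma\circ\psi)(t)|\le\|\VV(\sigma\circ\psi)\|_\infty\le c_V\|\sigma\circ\psi\|_{\L^p}$, and use sublinearity, $\|\sigma\circ\psi\|_{\L^p}^p\le c\int_0^T(1+|\psi(s)|^p)\,ds$. The restriction of the integral to $[0,t]$ is exactly where $E^0$-causality of $\VV$ enters (as it holds in every application of this lemma): from $e_t\VV e_t=e_t\VV$ the value at $t$ depends only on $\psi|_{[0,t]}$, so $\VV(\sigma\circ\psi)(t)=\VV\bigl((\sigma\circ\psi)\car_{[0,t]}\bigr)(t)$ and the $\L^p$-norm may be taken over $[0,t]$; in any case $\int_0^t\le\int_0^T$, so the cruder second bound holds after $\int_0^t|\psi|^p\le T\|\psi\|_\infty^p$. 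The explicit power $T^{\eta+1/p}$ is obtained by tracking the $T$-dependence of the sup-norm half of the $\L^p\to\Hol(\eta)$ bound (of the type in Proposition~\ref{prop:proprietes_int_RL}, whose sup-norm estimate carries a factor $T^\eta$) against the $T^{1/p}$ coming from the constant in the sub-linear inequality; here the right-hand side should read $(1+\int_0^t|\psi|^p\,ds)^{1/p}$, and the only real difficulty is the routine bookkeeping of these $T$-powers, the analytic substance being the same bounded-operator estimate used above.
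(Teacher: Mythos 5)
Your proposal is correct and follows essentially the same route as the paper: every claim is reduced to the boundedness of $\VV$ from $\L^p$ into $\Hol(\eta)\hookrightarrow C([0,T];\R^n)$ combined with the Lipschitz (resp.\ sub-linear) bounds on $\sigma$, with the Gâteaux derivative obtained by passing the limit of the difference quotient through $\VV$ exactly as in the paper's mean-value-plus-continuity argument. Your explicit remarks that the restriction of the integral to $[0,t]$ and the factor $T^{\eta}$ require the causality and the $I_{\alpha,p}$-type mapping property (and that the first displayed bound should carry a power $1/p$) are refinements of points the paper's own proof treats implicitly, not deviations from it.
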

\begin{proof}
  Let $\psi$ and $\phi$ be two continuous functions, since $ C([0,T],
  \, \R^n)$ is continuously embedded in $\L^p$,
  $\VV(\sigma\circ\psi-\sigma\circ \phi)$ belongs to
  $\Hol(\eta)$. Moreover,
  \begin{align*}
    \sup_{t\le T} | \VV(\sigma\circ \psi\
    \car_{[a,b]})(t)-\VV(\sigma\circ \phi\ \car_{[a,b]})(t) | & \le
    c\,\|\VV((\sigma\circ\psi-\sigma\circ \phi)  \ \car_{[a,b]})\|_{\Hol(\eta)}\\
    &\le c \,\|(\sigma\circ\psi-\sigma\circ \phi)\ \car_{[a,b]}\|_{\L^p}\\
    &\le c \,\|\phi-\psi\|_{\L^p([a,\, b])}\\
    &\le c \,\sup_{t\le T} | \psi(t)-\phi(t) |,
  \end{align*}
  since $\sigma$ is Lipschitz continuous.

  Let $\psi$ and $\psi$ two continuous functions on $[0,T]$. Since
  $\sigma$ is Lipschitz continuous, we have
  \begin{equation*}
    \sigma(\psi(t)+\ee \phi(t))=\sigma(\psi(t))+\ee\int_0^1 \sigma^\prime(u\psi(t)+(1-u)\phi(t))\d u.
  \end{equation*}
  Moreover, since $\sigma$ is Lipschitz, $\sigma^\prime$ is bounded
  and
  \begin{equation*}
    \int_0^T \left| \int_0^1 \sigma^\prime(u\psi(t)+(1-u)\phi(t))\d
      u\right|^p \d t \le c \, T.
  \end{equation*}
  This means that $(t\mapsto \int_0^1
  \sigma^\prime(u\psi(t)+(1-u)\phi(t))\d u)$ belongs to $\L^p.$ Hence,
  according to Hypothesis \ref{hypA},
  \begin{equation*}
    \|  \VV(\int_0^1 \sigma^\prime(u\psi(.)+(1-u)\phi(.))\d u)\|_C\le c T.
  \end{equation*}
  Thus,
  \begin{equation*}
    \lim_{\ee\to 0} \ee^{-1}(\VV(\sigma\circ (\psi+\ee\phi))-\VV(\sigma\circ \psi)) \text{ exists,}
  \end{equation*}
  and $\VV\circ \sigma$ is Gâteaux differentiable and its differential
  is given by \eqref{eq:18}.

  Since $\sigma\circ\psi$ belongs to $C([0,T],\, \R^n)$, according to
  Hypothesis \ref{hypA}, we have:
  \begin{align*}
    |\VV(\sigma\circ \psi)(t)|&\le c\left(\int_0^t s^{\eta p}|\sigma(\psi(s))|^p \d s\right)^{1/p}\\
    &\le cT^\eta  \left(\int_0^t (1+|\psi(s)|^p) \d s\right)^{1/p}\\
    &\le c T^{\eta+1/p} (1+\|\psi\|_\infty^p )^{1/p}\\
    &\le c T^{\eta+1/p} (1+\|\psi\|_\infty).
  \end{align*}
  The proof is thus complete.
\end{proof}
Following \cite{MR658690}, we then have the following non trivial
result.
\begin{theorem}
  \label{thm:continuite_en_temps_Z}
  Assume that Hypothesis \ref{hypA}$(p,\, \eta)$ holds and that
  $\sigma$ is Lipschitz continuous.  Then, there exists one and only
  one measurable map from $\Omega \times [0,T] \times [0,T]$ into $\G$
  which satisfies the first two points of Definition
  \eqref{eq:C}. Moreover,
  \begin{equation*}
    \esp{|Z_{r,\, t}(x)-Z_{r^\prime,\, t} (x^\prime)|^p}\le c
    (1+|x|^p\vee |x^\prime|^p)\, \left(|r-r^\prime|^{p\eta}+|x-x^\prime|^p\right)
  \end{equation*}
and 
 for any $x\in \R^n$, for any $0\le r\le t \le
  T,$ we have
  \begin{equation*}
    \esp{|Z_{r,t}(x)|^p}\le c (1+|x|^p)e^{c T^{\eta p +1}}.
  \end{equation*}
  Note even if $x$ and $x^\prime$ are replaced by
  $\sigma\{\check{B}^T(u),\, t\le u\}$ measurable random variables,
  the last estimates still holds.
\end{theorem}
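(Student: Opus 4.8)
The plan is to treat \eqref{eq:C}, for a fixed terminal time $t$, as a past-dependent Itô equation in the lower variable $r$, to solve it by a contraction argument, and then to promote the resulting moment bounds into the pathwise regularity and flow statements via Kolmogorov's criterion and the theory of \cite{MR658690}. First I would fix $t$ and set up the Picard map $\Phi$ sending a candidate process $(Z_{r,t})_{0\le r\le t}$ to
$$
\Phi(Z)_{r}=x-\int_r^t \VV\bigl(\sigma\circ Z_{\cdot,t}\,\car_{[r,t]}\bigr)(s)\,\d\check{B}^T(s).
$$
The $E^0$-causality of $\VV$ guarantees that $\VV(\sigma\circ Z_{\cdot,t}\,\car_{[r,t]})(s)$ depends only on $Z_{u,t}$ for $u\le s$, so the integrand is adapted, the Itô integral is well defined, and the adaptedness demanded in the first point of the definition is delivered. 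Applying the Burkholder--Davis--Gundy inequality and then the Lipschitz estimate of Lemma \ref{lem:borne_VVsigma} (together with the continuous embedding of $C([0,T],\R^n)$ into $\L^p$) yields $\esp{\sup_r|\Phi(Z)_r-\Phi(Z')_r|^p}\le c\,\esp{\int|Z-Z'|^p}$, so $\Phi$ is a contraction in a suitably weighted norm (or by iteration on short intervals which are then patched). This gives existence and uniqueness, and since Lipschitz continuity entails the sub-linear bound \eqref{eq:22}, the growth estimate of Lemma \ref{lem:borne_VVsigma} combined with Gronwall's lemma produces the a priori bound $\esp{\sup_r|Z_{r,t}(x)|^p}\le c(1+|x|^p)$.

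Next I would establish the two-variable estimate. For the dependence on the initial point, subtracting the equations for $Z_{r,t}(x)$ and $Z_{r,t}(x')$, applying BDG together with the Lipschitz property of $\VV\circ\sigma$, and invoking Gronwall gives $\esp{|Z_{r,t}(x)-Z_{r,t}(x')|^p}\le c\,|x-x'|^p$. For the dependence on $r$, the decisive observation is that linearity and causality of $\VV$ collapse the difference into a single stochastic integral: for $r<r'$,
$$
Z_{r,t}-Z_{r',t}=-\int_r^t \VV\bigl(\sigma\circ Z_{\cdot,t}\,\car_{[r,r')}\bigr)(s)\,\d\check{B}^T(s),
$$
the contribution of $\car_{[r',t]}$ cancelling between the two equations. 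Applying BDG to this identity and then exploiting the $\Hol(\eta)$-smoothing of $\VV$ from Hypothesis \ref{hypA} --- rather than a mere $\L^p\to\L^\infty$ bound --- to extract a factor $|r-r'|^{\eta}$ from the integrand supported on $[r,r')$ produces $\esp{|Z_{r,t}(x)-Z_{r',t}(x)|^p}\le c(1+|x|^p)|r-r'|^{p\eta}$, in the spirit of the trajectorial regularity of Theorem \ref{thm:regularite_trajectoire}. Combining the two one-variable bounds through the triangle inequality gives the stated joint estimate, and Kolmogorov's continuity theorem then furnishes a version of $(r,x)\mapsto Z_{r,t}(x)$ that is jointly continuous.

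With the moment estimates in hand I would follow \cite{MR658690} to upgrade $Z$ to a flow: the Lipschitz and linear-growth bounds, applied also to the difference quotients, show that $x\mapsto Z_{r,t}(\omega,x)$ is a.s. a homeomorphism of $\R^n$, so $Z$ takes values in $\G$, and the same scheme applied to the inverse gives the analogous estimate for $Z^{-1}$. The $\LL_{p,1}$ membership required in the second point follows from the Gâteaux differentiability of $\VV\circ\sigma$ established in Lemma \ref{lem:borne_VVsigma}: differentiating \eqref{eq:C} in the Malliavin sense produces a linear equation for $\n^\WW Z$ with bounded coefficients (since $\sigma'$ is bounded), and a further BDG--Gronwall argument bounds $\n^\WW Z$, and likewise $\n^\WW Z^{-1}$, in $L^p$. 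Finally, the extension to $\sigma\{\check{B}^T(u),\,t\le u\}$-measurable endpoints uses that, by Theorem \ref{thm:int_strato_inverse}, the Itô integral in \eqref{eq:C} is also a Stratonovitch integral, so the substitution formula of Corollary \ref{cor:substitution} applies and the estimate is preserved when a random initial value independent of the relevant driving increments is inserted.

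The step I expect to be the main obstacle is the sharp $r$-regularity estimate: one must convert the $\Hol(\eta)$-smoothing of $\VV$ into the precise factor $|r-r'|^{p\eta}$ for the stochastic integral, and because $\VV$ need not be a kernel operator (as stressed after Example \ref{ex:gfbm}) this cannot be done by manipulating an explicit integral kernel but must be carried out at the operator level, controlling $\|\VV(\,\cdot\,\car_{[r,r')})\|_{\L^2}$ solely through Hypothesis \ref{hypA}. A secondary difficulty is the $\LL_{p,1}$-regularity of the inverse flow $Z^{-1}$, which requires combining the homeomorphism property with Malliavin differentiability uniformly in the spatial variable.
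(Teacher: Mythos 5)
Your proposal follows the same route as the paper, whose entire proof consists of citing Uppman's theory of past-dependent stochastic differential equations \cite{MR658690} for existence, uniqueness and the homeomorphism property, invoking BDG and Gronwall ``as usual'' for the regularity in $r$ and $x$, and using the independence of $\sigma\{\check{B}^T(u),\, t\le u\}$ from the relevant driving increments when $x$, $x^\prime$ are random. Your write-up merely makes explicit the steps the paper leaves implicit (the adaptedness of the integrand via causality, the contraction set-up, the causality collapse of the $r$-increment, the a priori moment bound), so it is consistent with, and more detailed than, the paper's argument.
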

\begin{proof}
  Existence, uniqueness and homeomorphy of a solution of \eqref{eq:C}
  follow from \cite{MR658690}. The regularity with respect to $r$ and
  $x$ is obtained as usual by BDG inequality and Gronwall Lemma. For
  $x$ or $x^\prime$ random, use the independence of
  $\sigma\{\check{B}^T(u),\, t\le u\}$ and $\sigma\{\check{B}^T(u),\,
  r\wedge r^\prime\le u \le t \}$.
\end{proof}
% \begin{corollary}
%   \label{cor:normelpu}
%   Assume that Hypothesis \ref{hypA} holds and that $\sigma$ is
%   Lipschitz continuous and sub-linear. Let $Z$ be a solution of
%   \eqref{eq:C}. Then,
% \end{corollary}
% \begin{proof}
%   According to BDG inequality and Lemma \ref{lem:borne_VVsigma}, we
%   have:
%   \begin{align*}
%     \esp{|Z_{r,t}(x)|^p}&\le c (|x|^p + \esp{\int_r^t |\VV(\sigma\circ Z_{.,t}(x)\car_{[r,t]}(.))(s)|^p \d s})\\
%     &\le c T^{\eta p+1}(1+|x|^p+\esp{\int_r^t \int_r^s |Z_{\tau,t}(x)|^p\d\tau\d s}\\
%     &\le c T^{\eta p+1}(1+|x|^p+\int_r^t\esp{|Z_{\tau,t}|^p}\d\tau).
%   \end{align*}
%   We then conclude by Theorem \ref{thm:continuite_en_temps_Z} and Gronwall Lemma.
% \end{proof}

% The Malliavin differentiability, though not hard to prove directly,
% can then be ensured by \cite[Theorem 3.1]{hirsch88}.
\begin{theorem}
  Assume that Hypothesis \ref{hypA}$(p,\, \eta)$ holds and that $\sigma$ is
  Lipschitz continuous and sub-linear. Then, for any $x\in \R^n$, for
  any $0\le r\le s\le t\le T$, $(\omega,\, r)\mapsto Z_{r,s}(\omega,
  \, Z_{s,t}(x))$ and $(\omega,\, r)\mapsto Z_{r,t}^{-1}(\omega, \,
  x)$ belong to $\LL_{p,1}.$
\end{theorem}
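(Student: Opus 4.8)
The plan is to prove the two memberships in turn, first establishing the building block $(\omega,r)\mapsto Z_{r,t}(\omega,x)\in\LL_{p,1}$ for fixed $t$ and $x$, and then deducing the composition and the inverse. Since Equation \eqref{eq:C} is an Itô equation driven by $\check{B}^T$, it is natural to compute the Malliavin gradient $\cn$ associated with $\check{B}^T$: by Theorem \ref{thm:cdelta} the spaces $\D_{p,1}$ and $\check{\D}_{p,1}$ coincide and the two gradients differ only through the involution $\taut$, so a bound on $\esp{\|\cn Z\|^p}$ is exactly what controls the $\LL_{p,1}$ norm. The $\L^p$ integrability of $r\mapsto Z_{r,t}(x)$ is already supplied by the moment bound of Corollary \ref{cor:normelpu} together with the continuity estimate of Theorem \ref{thm:continuite_en_temps_Z}; the only analytic facts I shall use about the non-kernel operator $\VV$ are the Lipschitz, sub-linear and Gâteaux-differentiability properties of $\phi\mapsto\VV(\sigma\circ\phi\,\car_{[r,t]})$ recorded in Lemma \ref{lem:borne_VVsigma}, in particular the differential \eqref{eq:18}.

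First I would show $(\omega,r)\mapsto Z_{r,t}(\omega,x)\in\LL_{p,1}$ via Picard approximation. Set $Z^{(0)}_{r,t}=x$ and
\begin{equation*}
  Z^{(n+1)}_{r,t}=x-\int_r^t \VV(\sigma\circ Z^{(n)}_{.,t}\,\car_{[r,t]})(s)\,\d\check{B}^T(s).
\end{equation*}
By induction each $Z^{(n)}$ belongs to $\LL_{p,1}$, since the Itô integral of an $\LL_{p,1}$-integrand is again in $\LL_{p,1}$; differentiating the recursion and using the commutation of $\cn$ with the Itô integral together with \eqref{eq:18} gives
\begin{multline*}
  \cn_\theta Z^{(n+1)}_{r,t}=-\VV(\sigma\circ Z^{(n)}_{.,t}\,\car_{[r,t]})(\theta)\,\car_{[r,t]}(\theta)\\
  -\int_r^t \VV\bigl(\sigma^\prime\circ Z^{(n)}_{.,t}\,\cn_\theta Z^{(n)}_{.,t}\,\car_{[r,t]}\bigr)(s)\,\d\check{B}^T(s).
\end{multline*}
Applying the Burkholder--Davis--Gundy inequality with the Lipschitz and differentiability estimates of Lemma \ref{lem:borne_VVsigma} (boundedness of $\sigma^\prime$ and continuity of $\VV$) produces a closed Gronwall inequality for $\sup_n\esp{\|\cn Z^{(n)}\|_{\L^p\otimes\L^p}^p}$, hence a uniform gradient bound. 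Since $Z^{(n)}\to Z$ in $L^p$ (the contraction already used to solve \eqref{eq:C}) while the gradients stay $L^p$-bounded, the closedness of $\cn$ yields $Z_{.,t}(x)\in\LL_{p,1}$ with $\cn Z$ the limit, the bound being uniform in $x$ up to the factor $(1+|x|^p)$ of Corollary \ref{cor:normelpu}.

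For the composition $(\omega,r)\mapsto Z_{r,s}(\omega,Z_{s,t}(x))$ I would combine the previous step with the flow and substitution properties. By Theorem \ref{thm:continuite_en_temps_Z} the map $y\mapsto Z_{r,s}(\omega,y)$ is a Lipschitz homeomorphism (from \cite{MR658690}), and $Z_{s,t}(x)\in\D_{p,1}$ is measurable with respect to the increments of $\check{B}^T$ on $[s,t]$, independent of those carrying $Z_{r,s}(\cdot)$ on $[r,s]$. The Malliavin chain rule then gives
\begin{equation*}
  \cn_\theta\bigl[Z_{r,s}(Z_{s,t}(x))\bigr]=\bigl(\cn_\theta Z_{r,s}\bigr)(y)\big|_{y=Z_{s,t}(x)}+\partial_y Z_{r,s}(y)\big|_{y=Z_{s,t}(x)}\,\cn_\theta Z_{s,t}(x),
\end{equation*}
each factor being $L^p$-controlled by the estimates above and by the spatial regularity of the flow, the freezing $y=Z_{s,t}(x)$ being legitimate by Corollary \ref{cor:substitution}. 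For the inverse $(\omega,r)\mapsto Z_{r,t}^{-1}(\omega,x)$ I would Malliavin-differentiate the identity $Z_{r,t}(Z_{r,t}^{-1}(x))=x$ and solve for the unknown gradient,
\begin{equation*}
  \cn_\theta\bigl[Z_{r,t}^{-1}(x)\bigr]=-\bigl[\partial_y Z_{r,t}(y)\big|_{y=Z_{r,t}^{-1}(x)}\bigr]^{-1}\bigl(\cn_\theta Z_{r,t}\bigr)(y)\big|_{y=Z_{r,t}^{-1}(x)},
\end{equation*}
the inverse flow being governed by an equation of the same structure via \cite{MR658690}, which furnishes both invertibility of $\partial_y Z_{r,t}$ and $L^p$ control of $(\partial_y Z_{r,t})^{-1}$.

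The main obstacle is the first step under mere Lipschitz (rather than $C^1$) regularity of $\sigma$: one must justify that the formally linearized equation genuinely identifies the Malliavin derivative, which is exactly where the closability argument and the uniform gradient bounds are needed, and where the operator-theoretic nature of $\VV$ (it is not a kernel operator, cf. the remark following Example \ref{ex:gfbm}) forces every estimate to be kept at the level of the continuity bound and the differential \eqref{eq:18}, rather than through pointwise kernels. The secondary difficulty is the flow's spatial Jacobian $\partial_y Z$ entering both the composition and the inverse: controlling its $L^p$ moments, together with those of its inverse, uniformly in the frozen variable, relies on the homeomorphism property of \cite{MR658690} and on the independence of the increment $\sigma$-fields used in the substitution.
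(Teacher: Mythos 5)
Your overall architecture matches the paper's: a linearized equation for the gradient of $Z$ closed by BDG and Gronwall, the composition handled through independence of the increment $\sigma$-fields together with the substitution formula, and the inverse handled by differentiating $Z_{r,t}\circ Z_{r,t}^{-1}=\id$. Two of your choices differ only cosmetically from the paper. For the first step the paper does not run a Picard scheme: it invokes Hirsch's differentiability theorem for past-dependent SDEs (\cite[Theorem 3.1]{hirsch88}) to get $\nabla_u Z_{r,t}$ and the linearized equation directly, then applies BDG and Gronwall exactly as you do; your iteration-plus-closability argument is a self-contained substitute for that citation and is acceptable. For the inverse, the paper is more careful about \emph{why} the formal chain-rule identity implies Sobolev membership: it verifies Sugita's three-point characterization of $\D_{p,1}$ (\cite{MR810975}) --- absolutely continuous version along Cameron--Martin shifts, convergence in probability of difference quotients, square integrability of the derivative --- via the implicit function theorem. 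Your proposal differentiates the identity formally without addressing this, which is a minor but real omission given that $\sigma$ is only Lipschitz.

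The genuine gap is your treatment of the inverse spatial Jacobian. You assert that \cite{MR658690} ``furnishes both invertibility of $\partial_y Z_{r,t}$ and $L^p$ control of $(\partial_y Z_{r,t})^{-1}$.'' The homeomorphism property gives almost-sure invertibility of the flow, but it does not give moment bounds on $(\partial_x Z_{r,t})^{-1}$, and in the present setting one cannot simply quote the classical SDE flow estimates because the coefficient is the non-kernel, past-dependent operator $\VV(\sigma\circ Z_{\cdot,t}\,\car_{[r,t]})$. This negative-moment estimate,
\begin{equation*}
  \esp{\Bigl|\frac{\partial Z_{r,t}}{\partial x}\bigl(\omega,\,Z_{r,t}^{-1}(x)\bigr)\Bigr|^{-p}}<\infty,
\end{equation*}
is precisely the technical heart of the paper's proof of the inverse part: it is obtained by applying It\^o's formula to $|\partial_x Z_{u,t}(x)|^{2q}$ for arbitrary $q\in\R$ (including negative $q$), using that $\VV$ maps $\L^p$ into $\L^\infty$ to control the quadratic variation terms, and closing with Gronwall. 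Without supplying this estimate your final Hölder bound $\|DZ_{r,t}^{-1}\|_{p,1}\le c\,\|Z_{r,t}\|_{2p,1}\,\|(\partial_x Z_{r,t})^{-1}\|_{2p}$ has an uncontrolled factor, so the conclusion that $Z_{r,t}^{-1}\in\LL_{p,1}$ does not yet follow. The rest of your argument (the uniform gradient bound for $Z$, the independence argument for the composition) is sound and consistent with the paper.
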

\begin{proof}
  According to \cite[Theorem 3.1]{hirsch88}, the differentiability of
  $\omega\mapsto Z_{r,t}(\omega, \, x)$ is ensured. Furthermore,
  \begin{equation*}
    \nabla_u Z_{r,t}=- \VV(\sigma\circ Z_{.,t}\car_{[r,\, t]})(u)-\int_r^t  \VV
    (\sigma^\prime(Z_{.,t} ).\nabla_u Z_{.,t}\car_{[r,\, t]})(s) \d \check{B}(s),
  \end{equation*}
  where $\sigma^\prime$ is the differential of $\sigma.$ For $M>0$, let 
  \begin{equation*}
    \xi_M=\inf\{\tau,\, |\nabla_u Z_{\tau,\, t}|^p\ge M\} \text{ and }
    Z^M_{\tau,\ t}=Z_{\tau\vee \xi_M,\, t}.
  \end{equation*}
Since $\VV$
  is continuous from $\L^p$ in to itself and $\sigma$ is Lipschitz,
  according to BDG inequality, for $r\le u $,
  \begin{multline*}
    \esp{ |\nabla_u Z^M_{r,t}|^p }\\
    \begin{aligned}
      & \le c \esp{ |\VV(\sigma\circ Z^M_{.,t}\car_{[r,\, t]})(u)|^p }
      +c \, \esp{ \int_r^t |\VV(\sigma^\prime( Z^M_{.,t} )\, \nabla_u
        Z^M_{., t}\car_{[r,\, t]})(s)|^p \d s}\\
      &\le c \left(1+\esp{\int_r^t u^{p\eta} \int_r^u |Z_{\tau,t}|^p
          \d \tau\d u}
        + \esp{\int_r^t s^{p\eta} \int_r^s |\nabla_u Z^M_{\tau ,t}|^p\d\tau\ \d s}  \right)\\
      &\le c \left(1+\esp{\int_r^t |Z_{\tau,t}|^p
          (t^{p\eta+1}-\tau^{p\eta+1})\d \tau}
        + \esp{\int_r^t  |\nabla_u Z^M_{\tau ,t}|^p  (t^{p\eta+1}-\tau^{p\eta+1})\d\tau}\right)\\
      &\le ct^{p\eta+1} \left(1+\esp{\int_r^t |Z_{\tau,t}|^p \d \tau}
        + \esp{\int_r^t |\nabla_u Z^M_{\tau ,t}|^p \d\tau}\right).
    \end{aligned}
  \end{multline*}
  Then, Gronwall Lemma entails that
  \begin{equation*}
    \esp{|\nabla_u Z^M_{r,t}|^p }\le  c\, \left(1+\esp{\int_r^t |Z_{\tau,t}|^p \d \tau}\right),
  \end{equation*}
hence by Fatou lemma,
    \begin{equation*}
    \esp{|\nabla_u Z_{r,t}|^p }\le  c\, \left(1+\esp{\int_r^t |Z_{\tau,t}|^p \d \tau}\right).
  \end{equation*}
  The integrability of $\esp{|\nabla_u Z_{r,t}|^p }$ with respect to
  $u$ follows.

  Now, since $0\le r\le s\le t\le T$, $Z_{s,t}(x)$ is independent of
  $Z_{r,s}(x),$ thus the previous computations still hold and
  $(\omega,\, r)\mapsto Z_{r,s}(\omega, \, Z_{s,t}(x))$ belong to
  $\LL_{p,1}$.

  According to \cite{MR810975}, to prove that $Z_{r,t}^{-1}(x)$
  belongs to $\D_{p,1}$, we need to prove
  \begin{enumerate}
  \item for every $h\in \L^2$, there exists an absolutely continuous
    version of the process $(t\mapsto Z_{r,t}^{-1}(\omega+th,\, x))$,
  \item there exists $D Z_{r,t}^{-1}$, an $\L^2$-valued random
    variable such that for every $h\in \L^2$,
    \begin{equation*}
      \frac{1}{t}(    Z_{r,t}^{-1}(\omega+th,\, x)-Z_{r,t}^{-1}(\omega,\,
      x))\xrightarrow{t\to 0} \int_0^T D Z_{r,t}^{-1}(s) h(s)\d s,
    \end{equation*}
    where the convergence holds in probability,
  \item $D Z_{r,t}^{-1}$ belongs to $\L^2(\Omega,\, \L^2)$.
  \end{enumerate}
  We first show that
  \begin{equation}\label{eq:19}
    \esp{  \left| \frac{\partial
          Z_{r,t}}{\partial x}(\omega,\, Z_{r,t}^{-1}(x))\right|^{-p} }\text{ is finite.}
  \end{equation}
  Since
  \begin{equation*}
    \frac{\partial      Z_{r,t}}{\partial x}(\omega,\, x)=\id + \int_r^t
    \VV(\sigma^\prime(Z_{.,t}(x))  \frac{\partial     Z_{.,t}(\omega,\, x)}{\partial
      x})(s)\d \check{B}(s),
  \end{equation*}
  Let $\Theta_v=\sup_{u\le v} |\partial_x Z_{u,\, t}(x)|$. The same
  kind of computations as above entails that (for the sake of brevity,
  we do not detail the localisation procedure as it is similar to the
  previous one):
%   Itô's formula and BDG inequality yield, for any $q\in \R$, for any
%   $0\le r\le v\le t\le T$,
%   \begin{multline*}
%     \esp{\sup_{u\le v} |\partial_x Z_{u,\, t}(x)|^{2q}} \\
%     \begin{aligned}
%       &\le c+ c \esp{\int_u^t
%         \VV(\car_{[r,t]}\sigma^\prime(Z_{.,t}(x))\partial_x
%         Z_{.,t}(x))(s)^2 |\partial_x Z_{s,\, t}(x)|^{2(q-1)}\d s}\\
%       &+ c\esp{\left(\int_u^t |\partial_x Z_{s,\,
%             t}(x)|^{q-2}\VV(\car_{[r,t]}\sigma^\prime(Z_{.,t}(x))\partial_x
%           Z_{.,t}(x))(s)^2\d s\right)^2}.
%     \end{aligned}
%   \end{multline*}
% , since $\VV$
%   maps $\L^p$ to $\L^\infty$, we have
  \begin{multline*}
    \esp{\Theta_v^{2q}} \le c+c\, \esp{\int_u^t \Theta_s^{2(q-1)}
      \left(\int_u^s
        |\partial_x Z_{\tau,\, t}(x)|^{p}|\d\tau\right)^{2/p}\d s}\\
    +c\, \esp{\left(\int_u^t \Theta_s^{q-2} \left(\int_u^s |\partial_x
          Z_{\tau,\, t}(x)|^{p}|\d\tau\right)^{2/p}\right)^2\d s}.
  \end{multline*}
  Hence,
  \begin{equation*}
    \esp{\Theta_v^{2q}} 
    \le c\left(1+\int_v^t \esp{\Theta_s^{2q}}\d s\right),
  \end{equation*}
  and \eqref{eq:19} follows by Fatou and Gronwall lemmas.  Since $Z_{r,t}(\omega,
  \, Z_{r,t}^{-1}(\omega, \, x))=x,$ the implicit function theorem
  imply that $Z_{r,\, t}^{-1}(x)$ satisfies the first two properties
  and that
  \begin{equation*}
    {\nabla}Z_{r,t}(\omega,\, Z_{r,t}^{-1}(x))+\frac{\partial
      Z_{r,t}}{\partial x}(\omega,\, Z_{r,t}^{-1}(x))\tilde{\nabla}Z^{-1}_{r,t}(\omega,\, x).
  \end{equation*}
  It follows by Hölder inequality and Equation \eqref{eq:19} that
  \begin{equation*}
    \|  DZ_{r,t}^{-1}(x))\|_{p,1}\le c \|
    Z_{r,t}(x))\|_{2p,1}\|(\partial_x Z_{r,\, t}(x))^{-1}\|_{2p},
  \end{equation*}
  hence $Z_{r,\, t}^{-1}$ belongs to $\LL_{p,\, 1}.$
\end{proof}
\appendix

\end{document}